\newtheorem{theorem}{Theorem}[section]
\newtheorem{lemma}[theorem]{Lemma}
\newtheorem{corollary}[theorem]{Corollary}
\newtheorem{proposition}[theorem]{Proposition}
\theoremstyle{definition}
\newtheorem{definition}[theorem]{Definition}
\theoremstyle{remark}
\numberwithin{equation}{section}
\begin{document}

\title{ Rigidity of Polyhedral Surfaces, III
}

\author{Feng Luo}

\address{Department of Mathematics, Rutgers University, New Brunswick, New Jersey 08854}
\email{fluo@math.rutgers.edu }
\thanks{The work is supported in part by a NSF Grant.}


\subjclass{Primary 54C40, 14E20; Secondary 46E25, 20C20}
\date{Oct. 1, 2010.}

\dedicatory{To Dennis Sullivan on the occasion of his seventieth
birthday}

\keywords{polyhedral metrics, discrete curvatures, rigidity }

\begin{abstract} This paper investigates several global rigidity
issues for polyhedral surfaces including inversive distance circle
packings. Inversive distance circle packings are polyhedral
surfaces introduced by P. Bowers and K. Stephenson in \cite{BS} as
a generalization of Andreev-Thurston's circle packing. They
conjectured that inversive distance circle packings are rigid.
Using a recent work of R. Guo \cite{Guo} on variational principle
associated to the inversive distance circle packing, we prove
rigidity conjecture of Bowers-Stephenson in this paper. We also
show that each polyhedral metric on a triangulated surface is
determined by various discrete curvatures introduced in
\cite{Lu1}, verifying a conjecture in \cite{Lu1}. As a
consequence, we show that the discrete Laplacian operator
determines a Euclidean polyhedral metric up to scaling.
\end{abstract}

\maketitle

\section{Introduction}
\subsection{    }

This is a continuation of the study of polyhedral surfaces
\cite{Lu1}, \cite{Lu2}. The paper focuses on inversive distance
circle packings introduced by Bowers and Stephenson and several
other rigidity issues. Using a recent work of Ren Guo \cite{Guo},
we prove a conjecture of Bowers-Stephenson that inversive distance
circle packings are rigid. Namely, a Euclidean inversive distance
circle packing on a compact surface is determined up to scaling by
its discrete curvature. This generalizes an earlier result of
Andreev \cite{An} and Thurston \cite{Th} on the rigidity of circle
packing with acute intersection angles. In \cite{Lu1}, using
2-dimensional Schlaefli formulas, we introduced two families of
discrete curvatures for polyhedral surfaces and conjectured that
each of one these discrete curvatures determines the polyhedral
metric (up to scaling in the Euclidean case). We verify this
conjecture in the paper. One consequence is that for a Euclidean
or spherical polyhedral metric on a surface, the cotangent
discrete Laplacian operator determines the metric (up to scaling
in the case of Euclidean metric).  The theorems are proved using
variational principles and are based on the work of \cite{Guo} and
\cite{Lu1}. The main idea of the paper comes from reading of
\cite{BPS}, \cite{CV} and \cite{Ri}.

\subsection{ }
Recall that a \it Euclidean (or spherical or hyperbolic)
polyhedral surface \rm is a triangulated surface with a metric,
called a \it polyhedral metric, \rm so that each triangle in the
triangulation is isometric to a Euclidean (or spherical or
hyperbolic) triangle.  To be more precise, let $\bold E^2$, $\bold
S^2$ and $\bold H^2$ be the Euclidean, the spherical and the
hyperbolic 2-dimensional geometries.  Suppose $(S, T)$ is a closed
triangulated surface so that $T$ is the triangulation, $E$ and $V$
are the sets of all edges and vertices. A $K^2$  ($K^2$ = $\bold
E^2$, or $\bold S^2$, or $\bold H^2$) \it polyhedral metric \rm on
$(S, T)$ is a map $l: E \to \bold R$ so that whenever $e_i, e_j,
e_k$ are three edges of a triangle in $T$, then
$$ l(e_i) + l(e_j) > l(e_k),$$
and if $K^2=\bold S^2$, in addition to the inequalities above, one
requires
$$l(e_i)+l(e_j) + l(e_k) < 2\pi.$$
Given $l: E \to \bold R$ satisfying the inequalities above, there
is a metric on the surface $S$, called \it a polyhedral metric,
\rm so that the restriction of the metric to each triangle is
isometric to a triangle in $K^2$ geometry and the length of each
edge $e$ in the metric is $l(e)$. We also call $l: E \to \bold R$
the \it edge length function. \rm For instance, the boundary of a
generic convex polytope in the 3-dimensional space $\bold E^3$, or
$\bold S^3$ or $\bold H^3$ of constant curvature $0, 1,$ or $ -1$
is a polyhedral surface.
The \it discrete curvature $k$ \rm of a polyhedral surface is a
function $k:V \to \bold R$ so that $k(v) = 2\pi -\sum_{i=1}^m
\theta_i$ where $\theta_i$'s are the angles at the vertex $v$. See
figure 1.

\medskip
Since the discrete curvature is built from inner angles of
triangles, we consider inner angles of triangles as the basic unit
of measurement of curvature. Using inner angles, we introduce
three families of curvature like quantities in \cite{Lu1}.  The
relationships between the polyhedral metrics and curvatures are
the  focus of the study in this paper.

\medskip
\begin{definition} (\cite{Lu1}) Let $h \in \bold R$. Given a $K^2$ polyhedral metric  on $(S, T)$
where $K^2$ $=\bold E^2$, or $\bold S^2$ or $\bold H^2$, the \it
$\phi_{ h  }$ curvature \rm of a polyhedral metric is the function
$\phi_{ h  }: E \to \bold R$ sending an edge $e$ to:

\begin{equation}
\phi_{ h  }(e) = \int_{\pi/2}^{a} \sin^{ h  }(t) dt +
\int_{\pi/2}^{a'} \sin^{ h  }(t) dt
 \end{equation} where $a, a'$
are the inner angles facing the edge $e$. See figure 1.

The \it $\psi_{ h  }$ curvature \rm of the metric $l$ is the
function $\psi_{ h  }: E \to \bold R$ sending an edge $e$ to
\begin{equation} \psi_{ h  }(e) =\int^{\frac{b+c-a}{2}}_0 \cos^{ h  }(t) dt
+ \int^{\frac{b'+c'-a'}{2}}_0 \cos^{ h  }(t) dt
 \end{equation}
where $b,b',c,c'$ are inner angles adjacent to the edge $e$ and
$a,a'$ are the angles facing the edge $e$. See figure 1.
\end{definition}
\medskip
\begin{figure}[ht!]
\centering
\includegraphics[scale=0.7]{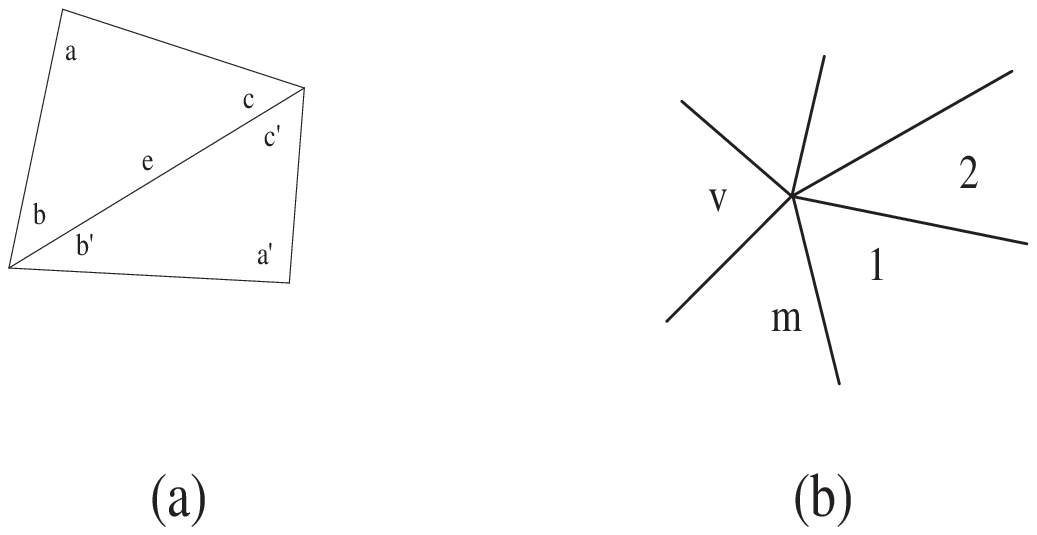}
\caption{} \label{figure 1.1}
\end{figure}


 The curvatures $\phi_0$ and $\psi_0$
were first introduced by I. Rivin [Ri] and G. Leibon [Le]
respectively. If the surface $S = \bold S^2$, then these
curvatures are essentially the dihedral angles of the associated
3-dimensional hyperbolic polyhedra at edges. The curvature
$\phi_{-2}(e) =- \cot(a) - \cot(a')$ is the discrete (cotangent)
Laplacian operator on a polyhedral surface derived from the finite
element approximation of the smooth Beltrami Laplacian on
Riemannian manifolds.

One of the remarkable theorems proved by Rivin \cite{Ri} is that a
Euclidean polyhedral metric on a triangulated surface is
determined up to scaling by its $\phi_0$ discrete curvature. In
particular, he proved that
 an ideal convex hyperbolic polyhedron is
 determined up to isometry by its dihedral angles.

We prove,

\begin{theorem}
Let $(S, T)$ be  a closed triangulated connected surface. Then for
any $h \in \bold R$,

(1) a Euclidean polyhedral metric on $(S, T)$ is determined up to
 isometry and scaling  by its  $\phi_{ h  }$
curvature.

(2) a spherical polyhedral metric on $(S, T)$ is determined up to
 isometry by its $\phi_{ h  }$ curvature.

(3) a hyperbolic polyhedral surface is determined up to isometry
by its $\psi_{ h  }$ curvature.
\end{theorem}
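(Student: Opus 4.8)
The plan is to use the variational principle: prove that each of the curvature maps $\phi_h$ (Euclidean or spherical) and $\psi_h$ (hyperbolic), viewed as a function of the edge length vector $l$ (or a suitable reparametrization), is a locally injective map, and then upgrade local injectivity to global rigidity. The starting point is to fix one triangle of $T$ and write down, for a $K^2$ triangle, the map sending the three edge lengths to the three angle-based local contributions that assemble into $\phi_h$ or $\psi_h$. First I would observe, following the mechanism of \cite{Lu1}, that for a single triangle there is a locally concave (or convex) function $W$ of the edge-length parameters whose differential is exactly the local contribution to the discrete curvature; concretely, one builds $W$ from the integrals $\int \sin^h$ or $\int \cos^h$ appearing in the definition of $\phi_h$, $\psi_h$ together with the appropriate change-of-variables (edge length versus its logarithm, or versus the $u$-coordinates of \cite{Guo}), and checks that the Hessian of $W$ restricted to each triangle is definite. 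This is precisely where I expect to lean on the computations of Guo \cite{Guo} and Luo \cite{Lu1}: the key lemma is that the relevant $3\times 3$ Jacobian $\partial(\text{angle functionals})/\partial(\text{edge parameters})$ on one triangle is symmetric and negative (resp.\ positive) definite.

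Granting the single-triangle Hessian computation, the global functional is obtained by summing $W$ over all triangles of $T$; since each edge is shared by two triangles, the differential of the total functional $\mathcal{W}(l) = \sum_{\sigma} W_\sigma$ with respect to the parameter of edge $e$ is exactly $\phi_h(e)$ (resp.\ $\psi_h(e)$), the two triangle-contributions being the two terms in the definitions (1.1), (1.2). Definiteness of each triangle Hessian then makes $\mathcal{W}$ strictly convex (or concave) on the space of admissible edge-parameter vectors, provided that space is convex — so the second step is to verify that the set of edge-parameter vectors giving a genuine $K^2$ polyhedral metric (i.e.\ satisfying the triangle inequalities, and the $<2\pi$ condition in the spherical case) is an open convex subset of $\mathbf{R}^E$ in the chosen coordinates, or at least can be handled by an extension-of-the-functional argument in the style of Bobenko–Pinkall–Springborn \cite{BPS} and Luo. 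A strictly convex function on a convex domain has injective gradient, which gives exactly: the polyhedral metric is determined by its $\phi_h$ (resp.\ $\psi_h$) curvature, up to the kernel of the scaling vector field in the Euclidean case.

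The remaining point is to pin down the scaling ambiguity. In the Euclidean case, scaling all edge lengths by a common factor fixes all angles, hence fixes $\phi_h$; so one works on the quotient by scaling (equivalently, normalizes $\sum_{e} \log l(e) = 0$ or restricts to the hyperplane orthogonal to the scaling direction) and shows the restricted functional is still strictly convex there, yielding "up to isometry and scaling." In the spherical and hyperbolic cases there is no scaling freedom — the geometry has an intrinsic length scale — so strict convexity directly gives "up to isometry."

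The main obstacle is the convexity/definiteness input on a single triangle for \emph{all} real $h$: the integrands $\sin^h t$ and $\cos^h t$ and the resulting Jacobians must be shown to have a fixed sign independent of $h$, which is not obvious from the formulas and is the technical heart — this is the point where the detailed estimates of \cite{Guo} and \cite{Lu1} (and the extension-of-functional trick when the admissible region fails to be convex, as happens for inversive distance packings) are indispensable. A secondary obstacle is checking that the natural parameter space (after the logarithmic or $u$-coordinate change) is actually convex, or otherwise that the functional extends convexly across the boundary; once both are in hand, the rigidity statement follows by the standard variational argument.
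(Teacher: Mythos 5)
Your plan is, at the top level, the same as the paper's: per-triangle convexity of an action functional whose partial derivatives are the angle integrals (theorem 4.1, quoted from \cite{Lu1}), summation over the triangles of $T$ so that the gradient of the total functional in the $u$-coordinates is the $\phi_h$ (resp.\ $\psi_h$) curvature, and the scaling direction in the Euclidean case handled by strict convexity transverse to $(1,1,1)$. The gap is in the step where you convert this into injectivity. As you suspect, the admissible set (all triangle inequalities, and the $<2\pi$ condition in the spherical case, imposed simultaneously in the $u=g(l)$ coordinates) is \emph{not} convex, so one must extend. But the extension that works -- extending each angle function by the constants $0$ and $\pi$ across the boundary of the triangle-inequality region (lemma 2.2 of the paper) -- produces a total functional that is convex but \emph{not} strictly convex: on each component of the complement of the admissible region the extended $1$-form has constant coefficients, so the extended $\tilde F$ is linear there. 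Consequently your concluding principle ``a strictly convex function on a convex domain has injective gradient'' is simply not available after the extension, and your fallback clause (``or at least can be handled by an extension-of-the-functional argument'') names the trick without supplying the argument that replaces strict convexity. Likewise, your opening claim that one proves local injectivity and then ``upgrades'' it to global rigidity is not a valid step: local rigidity was already known from \cite{Lu1} and \cite{Guo}, and the globalization is precisely the content of the theorem.

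What the paper actually does at this point is the following, and it is the missing idea. If two metrics had the same curvature, their $u$-coordinate vectors $p\neq q$ would be two critical points of the convex $C^1$ function $W(u)=\sum_{\{i,j,k\}\in T^{(2)}}\tilde F(u_i,u_j,u_k)-\sum_i a_iu_i$ defined on the \emph{cube} $g(J)^E$ (convex because it is a product of intervals, not because the metric region is convex). Convexity forces the entire segment from $p$ to $q$ to consist of minimizers, hence each per-triangle function $f_{ijk}(t)=\tilde F(tp+(1-t)q)$ is linear in $t$ on $[0,1]$. But for $t$ near $0$ and $1$ the arguments lie in the genuine metric region, where theorem 4.1 gives local \emph{strict} convexity (in the Euclidean case, strict convexity on the hyperplane $u_1+u_2+u_3=0$ together with the invariance $F(u+(c,c,c))=F(u)$ and a connectivity argument producing a triangle on which $p-q$ is not a constant vector) -- a contradiction. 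Making this rigorous also requires the analytic ingredients your proposal omits: continuity of the constant extension of the angles (cosine-law limits, lemma 2.2), closedness of the extended $1$-form across the degeneration locus (proposition 2.4), and the criterion that a $C^1$ function locally convex on both sides of a real-analytic hypersurface is convex (proposition 2.5 and corollary 2.6). With those pieces added, your outline becomes the paper's proof; without them, the passage from the single-triangle Hessian computation to global rigidity does not go through.
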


\medskip
We remark that theorem 1.2(1) for $h=0$ was aforementioned Rivin's
theorem. However, our proof of Rivin's theorem is different from
that in \cite{Ri} and we use the variational principle established
by Cohen-Kenyon-Propp \cite{CKP}. Theorem 1.2(3) for $h=0$ was
first proved by Leibon \cite{Le}. Theorem 1.2(2) for $h=0$ was
proved in \cite{Lu3} and theorem 1.2(2) and (3) for $h \leq -1$ or
$h \geq 0$ was proved in \cite{Lu1}.

Take $h=-2$ in theorem 1.2, we obtain,

\begin{corollary}
(1) A connected Euclidean polyhedral surface is determined up to
scaling by its discrete Laplacian operator.

(2) A spherical polyhedral surface is determined by its discrete
Laplacian operator.
\end{corollary}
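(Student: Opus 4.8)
The plan is to deduce Corollary 1.3 directly from Theorem 1.2 with $h=-2$, once the discrete Laplacian operator is identified with the $\phi_{-2}$ curvature. First I would recall the precise meaning of the cotangent Laplacian on a Euclidean or spherical polyhedral surface $(S,T,l)$: it is the linear operator $L$ on functions $f\colon V\to\bold R$ whose off-diagonal weight associated to an edge $e=uv$ is $w_e=\cot(a)+\cot(a')$ (up to the conventional overall factor $\tfrac12$), where $a,a'$ are the two angles facing $e$ in the triangles containing $e$, and whose diagonal entries are $L_{vv}=-\sum_{u\colon uv\in E}w_{uv}$, forced by $L(\mathbf{1})=0$. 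Thus $L$ is completely encoded by the edge function $e\mapsto w_e$, and conversely $w$ is read off from the off-diagonal entries of $L$; since the triangulation $T$ is fixed throughout, the data of the operator $L$ on $\bold R^V$ is equivalent to the data of the function $w\colon E\to\bold R$. Comparing with the formula recorded in the excerpt, $\phi_{-2}(e)=-\cot(a)-\cot(a')=-w_e$, so $L$ and the $\phi_{-2}$ curvature determine one another.

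With this identification both assertions are immediate. For (1): if two Euclidean polyhedral metrics $l,l'$ on $(S,T)$ have the same discrete Laplacian operator, then they have the same $\phi_{-2}$ curvature, so by Theorem 1.2(1) applied with $h=-2$ the metric $l'$ is obtained from $l$ by an isometry together with a global scaling; conversely a global scaling of a Euclidean polyhedral metric leaves every inner angle, hence $L$, unchanged, so "up to scaling" is the sharp conclusion. For (2): if two spherical polyhedral metrics have the same discrete Laplacian operator, they have the same $\phi_{-2}$ curvature, and Theorem 1.2(2) with $h=-2$ shows they are isometric; no scaling ambiguity appears here, since rescaling a spherical metric changes the inner angles and hence $L$.

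I do not expect a genuine obstacle: the corollary is a formal consequence of Theorem 1.2, and the only work is bookkeeping. The points needing care are verifying that the diagonal of $L$ carries no information beyond the off-diagonal weights (so that $L\leftrightarrow w\leftrightarrow\phi_{-2}$ really is an equivalence of data), confirming that the sign and normalization conventions match the finite-element cotangent Laplacian used in the literature so that the relation $\phi_{-2}=-w$ holds on the nose, and noting that the statement implicitly fixes the triangulation $T$, since otherwise the Laplacian is not defined on a fixed space $\bold R^V$. Once these are in place, the proof reduces to the single step of invoking Theorem 1.2 with $h=-2$.
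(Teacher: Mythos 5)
Your proposal is correct and follows the same route as the paper, which simply takes $h=-2$ in Theorem 1.2 after noting that the cotangent Laplacian's edge weights are $-\phi_{-2}(e)=\cot(a)+\cot(a')$. The extra bookkeeping you include (recovering the edge weights from the off-diagonal entries of $L$, and the scaling invariance of angles in the Euclidean case) is sound and just makes explicit what the paper leaves implicit.
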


Note that for a Euclidean polyhedral surface, $\phi_h = \psi_{h}$.
There remain two questions on whether  $\phi_h$ curvature
determines a hyperbolic polyhedral surface or whether  $\psi_h$
curvature determines a spherical polyhedral surface. It seems the
results may still be true in these cases.

\subsection{}
Inversive distance circle packings are polyhedral metrics on a
triangulated surface introduced by Bowers and Stephenson in
\cite{BS}. An expansion of the discussion of \cite{BS} is in
\cite{BH}. See also \cite{St}. They are generalizations of Andreev
and Thurston's circle packings. Unlike the case of Andreev and
Thurston where adjacent circles are intersecting, Bowers and
Stephenson allow adjacent circles to be disjoint and measure their
relative positions by the inversive distance. As observed in
\cite{BS}, this relaxation of intersection condition is very
useful for practical applications of circle packing to many
fields, including medical imaging and computer graphics. Based on
extensive numerical evidences, they conjectured the rigidity and
convergence of inversive distance circle packings in \cite{BS}.
Our result shows that Bowers-Stephenson's rigidity conjecture
holds.  The proof is based on a recent work of Ren Guo \cite{Guo}
which established a variational principle for inversive distance
circle packings.  A very nice geometric interpretation of the
variational principle was given in \cite{Gl}.

We begin with a brief recall of the inversive distance in
Euclidean, hyperbolic and spherical geometries. See \cite{BH} for
a more detailed discussion.  Let $K^2$ be $\bold E^2$, or $\bold
H^2$ or $\bold S^2$. Given two circles $C_1, C_2$ in $K^2$
centered at $v_1, v_2$ of radii $r_1$ and $r_2$ so that $v_1, v_2$
are of distance $l$ apart, the inversive distance $I = I(C_1,
C_2)$ between the circles is given by

\begin{equation}
 I = \frac{l^2 -r_1^2 -r_2^2}{2 r_1 r_2}
\end{equation}
in the  Euclidean plane,

\begin{equation}
 I = \frac{\cosh(l) - \cosh(r_1) \cosh(r_2)}{\sinh(r_1)
 \sinh(r_2)}
\end{equation}
in the hyperbolic plane and

\begin{equation}
I =  \frac{\cos(l) - \cos(r_1) \cos(r_2)}{\sin(r_1) \sin(r_2)}
\end{equation}
in the 2-sphere. See \cite{Guo} for more details on (1.4) and
(1.5).  If one considers $\bold E^2$, $\bold H^2$ and $\bold S^2$
as appeared in the infinity of the hyperbolic 3-space $\bold H^3$,
then $C_1$ and $C_2$ are the boundary of two totally geodesic
hyperplanes $D_1$ and $D_2$. The inversive distance $I$ is
essentially the hyperbolic distance (or the intersection angle)
between $D_1$ and $D_2$.  In particular, for the Euclidean plane
$\bold E^2$, the inversive distance $I(C_1, C_2)$ is invariant
under the inversion and hence the name.



Bowers and Stephenson's construction of an \it inversive distance
circle packing \rm with prescribed inversive distance on a
triangulated surface $(S, T)$ is as follows.  Fix once and for all
a vector $I \in \bold [-1, \infty)^E$, called the inversive
distance.

In the Euclidean case, for any $r \in \bold R_{>0}^V$, called \it
the radius vector, \rm define the edge length function $l \in
\bold R_{>0}^E$ by the formula

\begin{equation}
 l(e) =\sqrt{ r(v)^2 + r(u)^2 + 2r(v) r(u) I(e)}
\end{equation}
where the end points of the edge $e$ is $\{u,v\}$. If $l(e)$'s
satisfy the triangular inequalities that
\begin{equation}
l(e_i) + l(e_j) > l(e_k)
\end{equation}
for three edges $e_i, e_j, e_k$ of each triangle in $T$, then the
length function $l: E \to \bold R$ sending $e$ to $l(e)$ defines a
Euclidean polyhedral metric on $(S, T)$ called the \it inversive
distance circle packing  \rm with inversive distance $I(e)$ at
edge $e$. Note that if $I(e) \in [0,1]$ for all $e$, the
polyhedral metric is the circle packing investigated by Andreev
and Thurston where the intersection angle between two circles at
the end points of an edge is $\arccos(I(e))$.

In the hyperbolic geometry, one uses
\begin{equation}
 l(e) = \cosh^{-1}(\cosh(r(v)) \cosh (r(u)) + I(e) \sinh(r(v))
 \sinh(r(u))
\end{equation}
as the length of an edge. If (1.7) holds, then the lengths
$l(e)$'s define a \it hyperbolic inversive distance circle packing
\rm with inversive distances $I$ on $(S, T)$.  The \it spherical
inversive distance circle packing \rm is defined similarly with
additional condition on $l(e)$'s that
$$ l(e_i) + l(e_j) + l(e_k) < 2\pi$$ for each triangle with edges
$e_i, e_j, e_k$.

The geometric meaning of these polyhedral metrics is the
following. In each metric, if one draws a circle of radius $r(v)$
at each vertex $v$, then inversive distance of two circles at the
end points of an edge $e$ is the given number $I(e)$.

Our result which solves Bowers-Stephenson's rigidity conjecture is
the following.

\begin{theorem}
Given a closed triangulated connected surface $(S, T)$ with the
set of edges $E$ and $I \in \bold R_{\geq 0}^E$ considered as the
inversive distance,

(1) a hyperbolic  inversive distance circle packing metric on
$(S,T)$ of inversive distance $I$ is determined by its discrete
curvature $k: V \to \bold R$.

(2) an Euclidean inversive distance circle packing metric on
$(S,T)$ of inversive distance $I$ is determined by its discrete
curvature $k: V \to \bold R$ up to scaling.

\end{theorem}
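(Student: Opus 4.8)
The plan is to follow the standard variational strategy for rigidity of polyhedral metrics, using Ren Guo's result \cite{Guo} as the crucial local input. For a fixed inversive distance vector $I \in \bold R_{\geq 0}^E$, the radius vector $r \in \bold R_{>0}^V$ determines, via (1.6) (Euclidean) or (1.9) (hyperbolic), an edge length function, hence — on the subset $\Omega \subseteq \bold R_{>0}^V$ where all triangle inequalities (1.7) hold — a polyhedral metric and therefore a discrete curvature $k: V \to \bold R$. I want to show the map $r \mapsto k(r)$ is injective (in the Euclidean case, injective up to the scaling $r \mapsto \lambda r$, which only rescales lengths and leaves all angles, hence $k$, unchanged). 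The first step is to pass to logarithmic coordinates $u(v) = \log r(v)$ in the Euclidean case, or $u(v) = \log\tanh(r(v)/2)$ (or a similar change making the domain nicer) in the hyperbolic case, so that the natural variables live in an open subset $U \subseteq \bold R^V$ (or $\bold R^V / \bold R(1,\dots,1)$ after quotienting by scaling in the Euclidean case).

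The second and central step is to produce a strictly convex function whose gradient is, up to sign, the curvature. Guo's variational principle \cite{Guo} supplies, for each triangle $\tau$ with the inversive-distance edge-length structure, a locally concave (or convex, with appropriate sign conventions) function $F_\tau$ of the three relevant $u$-variables whose partial derivatives are the inner angles of $\tau$; the key point Guo establishes is that the Jacobian of the angle map with respect to these coordinates is symmetric and negative (resp. positive) definite on the region where the triangle is nondegenerate. Summing $F_\tau$ over all triangles of $T$ gives a function $F$ on $U$ (or rather on the appropriate domain) with $\partial F / \partial u(v) = \sum \theta_i = 2\pi - k(v)$ up to an additive constant, and with Hessian equal to the sum of the per-triangle Hessians. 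Because the domain $U$ obtained from Guo's analysis is connected (this is part of what \cite{Guo} provides — the admissible space of a single triangle with given inversive distances is connected and simply connected), $F$ is locally strictly convex on a connected set.

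The third step is the standard injectivity argument of Lemma-type used throughout \cite{Lu1, Lu2}: a locally strictly convex function on a connected open set (or more precisely, a $C^1$ function whose differential is a locally injective, hence globally injective, map because its Jacobian is everywhere definite on a connected domain) has an injective gradient. Concretely, if $r_1, r_2$ give the same curvature $k$, then $\nabla F(u_1) = \nabla F(u_2)$; restricting $F$ to the segment (or a path, if one must stay in $U$) joining $u_1$ to $u_2$ and using strict convexity along it forces $u_1 = u_2$. One subtlety: the global domain $\Omega$ where all triangle inequalities hold need not be convex, so I would either invoke the fact (established in this circle of ideas, going back to \cite{BPS, CKP}) that Guo's local concave functions extend to a $C^1$ globally concave function on a convex domain by extending each $F_\tau$ continuously across the degeneracy locus, or else run the connectedness argument directly on $\Omega$ using that it is connected. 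The Euclidean case then follows from the hyperbolic (or is handled in parallel) after quotienting by the one-dimensional scaling symmetry, noting $F$ descends to the quotient and is strictly convex transverse to the scaling direction.

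The main obstacle I anticipate is precisely this passage from the \emph{local} rigidity / definiteness statement of \cite{Guo} to a \emph{global} injectivity statement: the set $\Omega$ of admissible radius vectors is in general not convex, so one cannot naively join two points by a straight segment and differentiate. The resolution — and this is the conceptual heart, borrowed from \cite{BPS} and \cite{CKP} as the introduction indicates — is to extend the per-triangle energy functions $F_\tau$ by a continuous (indeed $C^1$) formula beyond the nondegenerate region, for instance by setting the angle of a degenerate triangle to be $0$ or $\pi$ as appropriate, so that the extended $\widehat F_\tau$ is convex on a genuinely convex domain; convexity of the sum $\widehat F$ then gives monotonicity of $\nabla \widehat F$, hence injectivity, and one checks afterward that two \emph{honest} circle packings with the same curvature, being critical configurations of the same type, must coincide. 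Verifying that Guo's functions extend with the required regularity, and that the extension does not introduce spurious coincidences, is the step requiring genuine care.
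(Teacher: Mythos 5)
Your plan is essentially the paper's own proof: extend Guo's per-triangle concave action across the degenerate triangles by freezing the angles at $0$ or $\pi$ (this is the paper's Lemma 2.2 together with Lemma 3.3 and Proposition 3.2, where one must check the components of the degenerate region are connected so the constant extension is well defined and the extended $1$-form stays closed), sum over triangles to get a $C^1$ convex functional on a convex domain in the $u$-coordinates, and conclude rigidity, modulo the scaling direction $(1,\dots,1)$ in the Euclidean case. The one point to state carefully — which your ``check afterward'' correctly anticipates — is that the extension is only convex, not strictly convex, so the gradient is not injective outright; as in the paper one argues that two curvature-equal packings are both minima of the convex functional $W$, hence $W$ is linear on the segment joining them, hence each per-triangle term is linear in $t$, and this contradicts Guo's strict local concavity (transverse to scaling) at $t$ near $0$ and $1$, where the configurations are honest non-degenerate packings.
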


Note that for $I \in [0, 1]^E$, the above result was
Andreev-Thurston's rigidity for circle packing with intersection
angles between $[0, \pi/2]$.  It seems the similar result may be
true for $I \in [-1, \infty)^E$.

\subsection{} The paper is organized as follows. In \S2, we prove
an extension lemma for angles of triangles.  We also establish a
criterion for extending a locally convex function to convex
function.  In \S3, we prove theorem 1.4. Theorem 1.2 is proved in
\S4.

The following notations and conventions will be used in the paper.
We use $\bold R$, $\bold R_{>0}$, $\bold R_{ \geq 0}$, $\bold
R_{<0}$ to denote the sets of all real numbers, positive real
numbers, non-negative real numbers, and negative real numbers
respectively.  If $X$ is a set, $\bold R^X =\{ f: X \to \bold R$\}
is the vector space of all functions on $X$. If $A$ is a subspace
of a topological space $X$, then the closure of $A$ in $X$ is
denoted by $\bar{A}$.

We thank Ren Guo for comments and careful reading of the
manuscript.

\section{Convex Extension of Locally Convex Functions}

\subsection{Continuous extension by constants}

\begin{definition} Suppose $A$ is a subspace of a topological
space $X$ and $f:A \to Y$ is continuous. If there exists a
continuous function $F: X \to Y$ so that $F|_A =f$ and $F$ is a
constant function on each connected component of $X-A$, then we
say $f$ can be \it extended continuously by  constant functions
\rm to $X$.
\end{definition}

Note that if each connected component of $X-A$ intersects the
closure of $A$, then the extension function $F$ is uniquely
determined by $f$.

The key observation of the paper is the following simple lemma.

\begin{lemma}
Suppose $\Delta$ is a triangle in the Euclidean plane $\bold E^2$,
or the hyperbolic plane $\bold H^2$, or the 2-sphere $\bold S^2$
so that its edge lengths are $l_1, l_2, l_3$ and its inner angles
are $\theta_1, \theta_2, \theta_3$. Assume that $\theta_i$'s angle
is opposite to the edge of length $l_i$ for each $i$. Consider
$\theta_i =\theta_i(l)$ as a function of $l=(l_1, l_2, l_3)$.

\begin{enumerate}
\item If $\Delta$ is Euclidean or hyperbolic, the angle function
$\theta_i$ defined on $\Omega =\{ (l_1, l_2, l_3) \in \bold R^3 |
l_1+l_2
> l_3, l_2+l_3 > l_1, l_3+l_1>l_2\}$ can be extended
continuously by constant functions to a function
$\tilde{\theta_i}$ on $\bold R^3_{>0}$.

\item If $\Delta$ is spherical, the angle function $\theta_i$
defined on $\Omega =\{ (l_1, l_2, l_3) \in \bold R^3 | l_1+l_2
> l_3, l_2+l_3 > l_1, l_3+l_1>l_2, l_1+l_2+l_3 < 2\pi\}$ can be extended
continuously by constant functions to a function
$\tilde{\theta_i}$ on $\bold (0, \pi)^3$.

\end{enumerate}
\end{lemma}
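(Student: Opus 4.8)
The plan is to analyze the angle function near the boundary of $\Omega$ inside $\mathbf{R}^3_{>0}$ (resp. $(0,\pi)^3$) and show it extends continuously by constants. First I would observe that the relevant boundary of $\Omega$ consists of the ``degenerate triangle'' loci where one triangle inequality becomes an equality, say $l_1 + l_2 = l_3$, together with (in the spherical case) the locus $l_1 + l_2 + l_3 = 2\pi$. The complement $\mathbf{R}^3_{>0} - \Omega$ is a union of connected pieces, one for each violated inequality; for instance the piece $\{l_3 > l_1 + l_2\}$. The claim to establish is that as $l \to l^0$ with $l^0$ on the face $l_1 + l_2 = l_3$ from within $\Omega$, the angles converge: $\theta_3 \to \pi$ and $\theta_1, \theta_2 \to 0$, and that these limiting values are constant along the whole face and on the adjacent component of the complement. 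Then defining $\tilde\theta_i$ to equal these constants on each component of the complement gives a well-defined function, and one checks continuity at the boundary points.

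The key computations are just the law of cosines in each geometry, solved for the angle. In the Euclidean case $\cos\theta_i = (l_j^2 + l_k^2 - l_i^2)/(2 l_j l_k)$, which is a rational (hence continuous) function of $l$ on all of $\mathbf{R}^3_{>0}$ with values in $[-1,1]$ precisely on $\bar\Omega$; when $l_3 = l_1 + l_2$ we get $\cos\theta_3 = -1$, and for $l_3 > l_1 + l_2$ the expression exceeds... is less than $-1$, so I would instead note that the correct extension takes $\theta_3 \equiv \pi$ there. The clean way is: set $\tilde\theta_i(l) = \pi$ if $l_i \ge l_j + l_k$, $\tilde\theta_i(l) = 0$ if $l_j \ge l_i + l_k$ or $l_k \ge l_i + l_j$, and $\tilde\theta_i(l) = \theta_i(l)$ on $\bar\Omega$; these agree on overlaps because only one inequality can fail at a time (in $\mathbf{R}^3_{>0}$ two of them cannot fail simultaneously). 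Continuity then follows from the law of cosines since $\arccos$ is continuous on $[-1,1]$ and $\theta_i \to \pi$ as $\cos\theta_i \to -1$, $\theta_i \to 0$ as $\cos\theta_i \to 1$. The hyperbolic case is identical with $\cosh l_i$ replacing $l_i^2$ appropriately and the hyperbolic law of cosines $\cosh l_i = \cosh l_j \cosh l_k - \sinh l_j \sinh l_k \cos\theta_i$.

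The spherical case (2) needs more care and is where the main obstacle lies. Here the spherical law of cosines $\cos l_i = \cos l_j \cos l_k + \sin l_j \sin l_k \cos\theta_i$ must be combined with \emph{two} types of degeneration: the ordinary triangle-inequality faces, where again one angle $\to \pi$ and the other two $\to 0$; and the new face $l_1 + l_2 + l_3 = 2\pi$, where the triangle degenerates in the ``antipodal'' direction and one checks that instead \emph{all three} angles tend to $\pi$ (the triangle fills up a hemisphere's worth and then some). So on the component of $(0,\pi)^3 - \Omega$ cut out by $l_1 + l_2 + l_3 > 2\pi$ I would set all $\tilde\theta_i \equiv \pi$. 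The subtlety is to confirm that on $(0,\pi)^3$ the regions $\{l_i > l_j + l_k\}$ and $\{l_1+l_2+l_3 > 2\pi\}$ are mutually exclusive (since $l_i < \pi$ forces $l_j + l_k > 2\pi - \pi = \pi > l_i$ is not immediate — one must check $l_i \ge l_j + l_k$ and $l_1+l_2+l_3 \ge 2\pi$ cannot both hold, which follows from $l_j + l_k \le l_i < \pi$ giving $l_1+l_2+l_3 < 2l_i < 2\pi$), so that the piecewise definition is consistent, and then to verify the limiting angle values by a direct asymptotic analysis of the spherical law of cosines as $l$ approaches each type of face. Once the constants are identified and shown to match up, continuity of $\tilde\theta_i$ on the whole cube is again just continuity of $\arccos$ together with these boundary limits.
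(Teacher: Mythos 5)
Your proposal is correct and follows essentially the same route as the paper: extend each $\theta_i$ by the constants $0$ or $\pi$ (and $\pi$ on the spherical region $l_1+l_2+l_3\ge 2\pi$) determined by the degenerate limits, and verify continuity from the Euclidean, hyperbolic and spherical laws of cosines. Your extra check that the components of the complement are pairwise disjoint inside $\mathbf{R}^3_{>0}$, resp.\ $(0,\pi)^3$, is a small (and correct) addition that the paper leaves implicit.
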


We call the set $\Omega$ in the lemma the \it natural domain \rm
of the length vectors.


\begin{proof}
In the case (1), the extension function $\tilde{\theta_i}$ of
$\theta_i$ is given by  $\tilde{\theta_i} = \pi$ when $l_i \geq
l_j+l_k$, and $\tilde{\theta_i} =0$ when $l_j \geq l_i+l_k$. It
remains to verify the continuity of $\tilde{\theta_i}$ on $\bold
R_{>0}^3$. It is based on the cosine law. Given a point $L =(L_1,
L_2, L_3)$ in the boundary $\bar{\Omega} -\Omega$ of $\Omega$
inside $\bold R^3_{>0}$, we may assume without loss of generality
that $L_1=L_2+L_3$. The continuity of $\tilde{\theta_i}$ follows
from

$$\lim_{ l \to L} \theta_1(l) =\pi,  \quad \lim_{l \to L}
\theta_j(l) =0, \quad j=2,3.$$

Indeed, the cosine law says, in the case of $\Delta \subset \bold
E^2$, that \begin{equation} \cos(\theta_i) =\frac{ l_j^2 + l_k^2
-l_i^2}{ 2 l_j l_k}. \end{equation}

One sees easily that when $l$ tends to $L$, then the
right-hand-side of (2.1) tends to $1$ if i=2,3 and $-1$ if $i=1$.
This verifies the continuity in the Euclidean case. In the
hyperbolic case, the cosine law says

\begin{equation}
\cos(\theta_i) =\frac{ \cosh(l_j)\cosh(l_k)
-\cosh(l_i)}{\sinh(l_j)\sinh(l_k)}.\end{equation} Thus one sees
that as $l$ tends to $L=(L_1, L_2, L_3)$ with $L_j >0$, the right-hand-side of (2.2) tends to
$1$ if $i=2,3$ and to $-1$ if $i=1$. Thus $\tilde{\theta_i}$ is
continuous.

To see (2), recall that the cosine law for spherical triangle says
\begin{equation}
\cos(\theta_i) =\frac{\cos(l_i) - \cos(l_j)\cos(l_k)
}{\sin(l_j)\sin(l_k)}.
\end{equation}
If $l$ tends to $L$ where $L_1=L_2+L_3$ with $L_i \in (0,\pi)$,
then $\lim_{l \to L} \cos(\theta_1) =-1$ and $\lim_{l \to L}
\cos(\theta_i) =1$ when $i=2,3$.  On the other hand, if
$L_1+L_2+L_3 =2\pi$ for $L_i \in (0, \pi)$, then the cosine law
implies that $\lim_{l \to L} \cos(\theta_i) =-1$ for all $i$,
i.e., all inner angles are $\pi$ in this case. Thus  by setting
the extended function $\tilde{\theta_i}$ in $(0, \pi)^3$ to be
$\tilde{\theta_i}(l) = \pi$ if $l_i \geq l_j + l_k$,
$\tilde{\theta_i}(l) = 0$ if $l_j \geq l_i + l_k$, and
$\tilde{\theta_i}(l) = \pi$ if $l_i + l_j + l_k \geq \pi$, (
$\{i,j,k\}=\{1,2,3\}$), we see that  $\tilde{\theta_i}$ is
continuous.

\end{proof}

\subsection{Continuous extension of 1-forms and
 of locally convex functions}

We establish some simple facts on extending closed 1-forms and
 locally convex functions to convex functions in this
subsection.

\begin{definition} A differential 1-form $w =\sum_{i=1}^n a_i(x) dx_i$
in an open set $U \subset \bold R^n$ is said to be \it continuous
\rm if each $a_i(x)$ is a continuous function on $U$. A continuous
1-form $w$ is called \it closed \rm if $\int_{\partial \tau} w=0$
for each Euclidean triangle $\tau$ in $U$.
\end{definition}

By the standard approximation theory, if $w$ is closed and
$\gamma$ is a piecewise smooth null homologous loop in $U$, then
$\int_{\gamma} w =0$. If $U$ is simply connected, then the
integral $F(x) = \int_a^x w$ is well defined, independent of the
choice of piecewise smooth paths in $U$ from $a$ to $x$. The
function $F(x)$ is $C^1$-smooth so that $\partial F(x) /
\partial x_i = a_i(x)$.


\begin{proposition} Suppose $X$ is an open set in $\bold
R^n$ and $A \subset X$ is an open subset bounded by a smooth
(n-1)-dimensional submanifold in $X$. If $w=\sum_{i=1}^n a_i(x)
dx_i$ is a continuous 1-form on $X$ so that $w|_A$ and $w|_{X
-\bar{A}}$ are closed where $\overline{A}$ is the closure of $A$
in $X$, then $w$ is closed in X.
\end{proposition}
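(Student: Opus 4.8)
The plan is to reduce the assertion to a purely local statement and then to treat the one delicate case, namely a Euclidean triangle straddling the separating hypersurface $S:=\partial A\cap X$. Every Euclidean triangle $\tau\subset X$ has, for each $\delta>0$, a simplicial subdivision into sub-triangles of diameter less than $\delta$, and $\int_{\partial\tau}w=\sum_{\tau'}\int_{\partial\tau'}w$ because the interior edges of the subdivision are traversed twice with opposite orientations. Hence, applying a Lebesgue-number argument to a cover of a given triangle by small open sets, it suffices to show that every $p\in X$ has an open neighbourhood $B\subset X$ with $\int_{\partial\tau}w=0$ for all Euclidean triangles $\tau\subset B$. If $p\in A$ (respectively $p\in X\setminus\overline A$), take $B$ to be a ball inside the open set $A$ (respectively $X\setminus\overline A$); then $w|_{B}$ is closed and $B$ is simply connected, so, as recalled after the definition of a closed $1$-form, every closed-loop integral of $w$ over $B$ vanishes, in particular $\int_{\partial\tau}w=0$. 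So the whole content is the case $p\in S$.

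Fix $p\in S$. Since $S$ is a smoothly embedded hypersurface, choose an open neighbourhood $B$ of $p$, diffeomorphic to a ball, in which $S\cap B$ is connected and $B\setminus S$ has exactly two components $B^{+}$ and $B^{-}$, each simply connected (flatten $S$ in a chart and take a coordinate ball). On $B^{+}$ the form $w=\sum_i a_i\,dx_i$ is closed and $B^{+}$ is simply connected, so $w|_{B^{+}}=dF^{+}$ with $F^{+}\in C^{1}(B^{+})$ and $\nabla F^{+}=(a_1,\dots,a_n)$; the $a_i$ being continuous on $\overline B$, the gradient $\nabla F^{+}$ is bounded, so $F^{+}$ is Lipschitz and extends continuously to a function $\overline{F^{+}}$ on $B^{+}\cup S$. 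Likewise one obtains $\overline{F^{-}}$ on $B^{-}\cup S$. The decisive point, and the only place where the continuity of $w$ across $S$ is really used, is that $\overline{F^{+}}$ and $\overline{F^{-}}$ differ by a constant on $S\cap B$: for a smooth arc $\gamma$ in $S\cap B$, pushing $\gamma$ slightly into $B^{+}$ along a unit normal field and letting the push tend to $0$ yields $\overline{F^{+}}(\gamma(1))-\overline{F^{+}}(\gamma(0))=\int_{\gamma}w$ by continuity of $w$, and the same identity holds for $\overline{F^{-}}$; hence $\overline{F^{+}}-\overline{F^{-}}$ has vanishing differential along the connected manifold $S\cap B$ and is therefore constant there. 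Subtracting this constant from $F^{-}$, the functions $\overline{F^{+}}$ and $\overline{F^{-}}$ glue to a single continuous function $F$ on $B$ with $F|_{B^{\pm}\cup S}=\overline{F^{\pm}}$ and $dF=w$ on $B\setminus S$.

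Now consider a Euclidean triangle $\tau\subset B$ whose boundary meets $S$ in only finitely many points; such triangles are dense among all triangles in $B$, since for generic vertices the three straight edges of $\tau$ are transverse to $S$ and hence meet it in finitely many points. List the points of $\partial\tau\cap S$ cyclically as $p_0,\dots,p_{m-1}$ along $\partial\tau$, and write $\partial\tau$ as the concatenation of the arcs $\gamma_j$ from $p_{j-1}$ to $p_j$ ($1\le j\le m$, indices mod $m$). Each open arc $\gamma_j\setminus\{p_{j-1},p_j\}$ lies in $B\setminus S$, hence in one of $B^{+},B^{-}$, so $\int_{\gamma_j}w=F(p_j)-F(p_{j-1})$, the endpoints lying on $S$ where $F$ is unambiguous. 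Summing around the cycle, $\int_{\partial\tau}w=\sum_{j=1}^{m}\bigl(F(p_j)-F(p_{j-1})\bigr)=0$; and if $m=0$ then $\tau\subset B^{\pm}$ and $\int_{\partial\tau}w=0$ as before. Finally, a general Euclidean triangle $\tau\subset B$ is a limit of such generic triangles $\tau_k$, and $\int_{\partial\tau_k}w\to\int_{\partial\tau}w$ by uniform continuity of $w$ on a compact neighbourhood; since each $\int_{\partial\tau_k}w=0$, also $\int_{\partial\tau}w=0$. Combined with the reduction of the first paragraph, this proves that $w$ is closed on $X$.

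The main obstacle is the construction in the second paragraph: showing that the local primitives on the two sides of $S$ extend continuously up to $S$ and match there up to an additive constant, so that they glue to a global continuous potential $F$ on a neighbourhood of each point of $S$. Once this is in hand, the remaining ingredients — the finite-subdivision reduction, the telescoping of $\int_{\partial\tau}w$ over the crossing points, and the density argument passing from generic to arbitrary triangles — are routine; in particular one never needs $F$ to be $C^{1}$ across $S$, only continuous.
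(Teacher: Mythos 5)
Your proof is correct, but it follows a genuinely different route from the paper's. The paper first uses the (asserted) facts that closedness is local and invariant under smooth changes of coordinates to normalize $A$ to the half-space $\{x_n>0\}$ in $X=\mathbf{R}^n$; a test triangle $\tau$ that meets both sides is then cut by the hyperplane $\{x_n=0\}$ into a triangle $\gamma_1$ and a quadrilateral $\gamma_2$, each lying in the closure of one side, and $\int_{\partial\gamma_i}w=0$ follows from the closedness hypothesis together with a standard approximation (push the piece slightly off the hyperplane and use continuity of $w$); since $\partial\tau=\partial\gamma_1+\partial\gamma_2$ at the chain level, $\int_{\partial\tau}w=0$. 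You instead never change coordinates on the triangles: you reduce to a local statement by subdivision and a Lebesgue-number argument, build near each point of the separating hypersurface a single continuous potential by extending the one-sided primitives $F^{\pm}$ up to $S$ and matching them up to a constant (this is where continuity of $w$ across $S$ enters), then handle triangles transverse to $S$ by telescoping over the finitely many crossing points and pass to arbitrary triangles by density and continuity. What each buys: the paper's cut-and-approximate argument is much shorter and needs no transversality, genericity, or potential-gluing; your argument is longer but avoids invoking diffeomorphism invariance of closedness for merely continuous $1$-forms (which the paper uses without proof and which amounts to the equivalence of closedness with local exactness), and it actually proves the stronger statement that $w$ is locally exact with a potential that is continuous across $S$ and $C^1$ off it, which is close to what is used in Proposition 2.5 and Corollary 2.6. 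The only small gloss on your side is the continuous extension of $F^{\pm}$ to $S$: bounded gradient gives Lipschitz control only in the inner metric, so you need the chosen components $B^{\pm}$ to be quasi-convex; this is indeed guaranteed by your choice of $B$ as (the image of) a coordinate half-ball, but it deserves a sentence.
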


\begin{proof}
Since closedness is a local property and is invariant under smooth
change of coordinates in $X$, we may assume that $X=\bold R^n$ and
$A =\{ (x_1, ..., x_n) \in \bold R^n | x_n >0\}$. Take a Euclidean
triangle $\tau \subset X$. To verify $\int_{\partial \tau } w=0$,
we may assume that $\tau$ is not in $\overline{A}$ or  $X-A$ since
otherwise $\int_{\partial \tau} w=0$ follows from the assumption
and the standard approximation theorem. In the remaining case,
$\tau$ intersects both $A$ and $X-A$.  The plane $x_n=0$ cuts the
triangle $\tau$ into a triangle $\gamma_1$ and a quadrilateral
$\gamma_2$ so that $\gamma_1$ and $\gamma_2$ are in the closure of
$A$ and $X-A$. We can express, in the singular chain level,
$\partial \tau = \partial \gamma_1 +
\partial \gamma_2$.
By definition, $\int_{\partial \gamma_i} w=0$ for each $i$. Thus
$\int_{\partial \tau} w = \int_{\partial \gamma_1} w +
\int_{\partial \gamma_2} w =0$.
\end{proof}

A \it real analytic codimension-1 submanifold \rm $Y$ in an open
set $X$ in $\bold R^n$ is a smooth submanifold so that locally $Y$
is defined by $k(x)=0$ for a non-constant real analytic function
$k$. Note that if $L$ is a (compact) line segment in $X$, then
either $L \subset Y$ or $L \cap Y$ is a finite set. This is due to
the fact that a non-constant real analytic function on an open
interval has isolated zeros.

Recall that a function $f$ defined on a convex set $X \subset
\bold R^n$ is called \it convex \rm if for all $p, q \in X$ and
all $t \in [0, 1]$, $tf(p) + (1-t) f(q) \geq f(tp+(1-t)q)$. It is
called \it strictly convex \rm if for all $p \neq q$ in $X$ and
all $t \in (0,1)$, $tf(p) + (1-t) f(q) > f(tp+(1-t)q)$.  A
function $f$ defined in an open set $U \subset \bold R^n$ is said
to be \it locally convex \rm (or \it locally strictly convex \rm)
if it is convex (or strictly convex) in a convex neighborhood of
each point.

\begin{proposition} Suppose $X \subset \bold R^n$ is an open
convex set and $A \subset X$ is an open subset of $X$ bounded by a
codimension-1 real analytic  submanifold  in $X$. If
$w=\sum_{i=1}^n a_i(x) dx_i$ is a continuous closed 1-form on $X$
so that $F(x) =\int^x_a w$ is locally convex in $A$ and in  $X
-\overline{A}$, then $F(x)$ is convex  in $X$.
\end{proposition}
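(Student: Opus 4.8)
The plan is to reduce convexity of $F$ on $X$ to convexity along each line segment, and then to use the real-analyticity of the hypersurface $\partial A$ to control how such a segment meets $A$, $\overline{A}$, and $X-\overline{A}$. Recall that a continuous function on a convex open set is convex if and only if its restriction to every (compact) line segment in the set is convex, and that a continuous function of one real variable is convex on an interval if and only if it is convex in a neighbourhood of each interior point; so it suffices to fix $p,q\in X$, set $L$ to be the segment $[p,q]$, let $g(t)=F((1-t)p+tq)$, and show $g$ is convex on $[0,1]$. Since $F$ is $C^1$ (Proposition~2.5 applied on the simply connected $X$), $g$ is $C^1$ as well, and it is enough to prove that $g'$ is non-decreasing, equivalently that $g$ is convex near each $t_0\in(0,1)$.

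Now I would split into cases according to the position of the point $x_0=(1-t_0)p+t_0q$. If $x_0\in A$ or $x_0\in X-\overline{A}$, then a whole neighbourhood of $x_0$ lies in that open set, where $F$ is locally convex by hypothesis, so $g$ is convex near $t_0$. The remaining case is $x_0\in \partial A = \overline{A}\setminus A$. Here is where real-analyticity enters: by the remark preceding the proposition, either the segment $L$ is entirely contained in the analytic hypersurface $Y=\partial A$, or $L\cap Y$ is a finite set. If $L\cap Y$ is finite, then the interior points $t$ of $[0,1]$ with $x(t)\notin Y$ form a finite union of open subintervals on each of which $x(t)$ stays in $A$ or in $X-\overline{A}$; thus $g$ is convex on the closure of each such subinterval (convexity on an open interval extends to its closure by continuity), and a continuous function that is convex on each of finitely many consecutive closed subintervals covering $[0,1]$ is convex on all of $[0,1]$ — this is the standard fact that one only needs to check the "no downward kink" condition $g'(t_i^-)\le g'(t_i^+)$ at the finitely many junction points $t_i$, and since $g$ is $C^1$ we actually have $g'(t_i^-)=g'(t_i^+)$, so there is no kink at all.

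It remains to handle the subcase $L\subset Y$. Then $L$ bounds $A$, so points just off $L$ on one side lie in $A$ and on the other side lie in $X-\overline{A}$; approaching $L$ from the $A$-side, the restriction of $F$ to segments parallel to $L$ is convex and converges uniformly on compacta to $g$, so $g$ is a locally uniform limit of convex functions and hence convex. (Alternatively, one may perturb $p$ and $q$ slightly off $Y$ to generic nearby points $p',q'$ with $[p',q']\cap Y$ finite, apply the finite-intersection case just proved, and pass to the limit using continuity of $F$.) The main obstacle is precisely this last situation where the segment lies inside the hypersurface; every other case is immediate from local convexity plus the $C^1$ gluing lemma, so the whole force of the hypothesis "$A$ is bounded by a real-analytic submanifold" is used to guarantee that a segment either avoids $Y$ in all but finitely many points or lies in $Y$, and in the latter case convexity is recovered by a limiting argument. $\square$
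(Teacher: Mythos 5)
Your proposal is correct and follows essentially the same route as the paper: reduce to the restriction of $F$ along a segment, use real analyticity of $\partial A$ to get the dichotomy (finite intersection versus $L\subset\partial A$), glue convexity across the finitely many subintervals using the continuity of the derivative of the $C^1$ function, and handle $L\subset\partial A$ by a limiting argument. Note that your parenthetical perturbation of the endpoints off the hypersurface is exactly the paper's argument for that last case, and it is the safer of your two options, since your primary one-sided ``parallel segment'' argument would still need a uniform collar to ensure the parallel segments stay in $A$.
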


\begin{proof} Since $X$ is simply connected, the function $F$ is
well defined. To verify convexity, take $p, q \in X$ and consider
$f(t) = F(tp+(1-t)q)$ for $t \in [0,1]$. It suffices to show that
$f(t)$ is convex in $t$.  Since $F$ is $C^1$-smooth, $f$ is
$C^1$-smooth. Let $\partial A =\bar{A} -A$ and $L$ be the line
segment from $p$ to $q$. Since $\partial A$ is real analytic,
either $L$ intersects $\partial A$ in a finite set of points, or
$L$ is in $\partial A$. In the first case, let $0=t_0< t_1 < ...,
t_n =1$ be the partition of $[0,1]$ so that the line segment $tp +
(1-t)q$ for $t \in (t_i, t_{i+1})$ is either in $A$ or in
$X-\overline{A}$.  By definition, $f(t)$ is convex in $[t_i,
t_{i+1}]$, i.e., $f'(t)$ is increasing in $[t_i, t_{i+1}]$ for
$i=0,..., n-1$. Since $f'(t)$ is continuous in $[0,1]$, this
implies that $f'(t)$ is increasing in $[0,1]$, i.e., $f(t)$ is
convex in $[0,1]$. In the second case that $L \subset
\partial A$, we take two  sequences of points $p_m$ and
$q_m$ converging to $p$ and $q$ respectively in $X$ so that $p_m,
q_m$ are not in $\partial A$. Then by the case just proved, the
functions $f_m (t) = F( tp_m + (1-t) q_m)$ are convex in $t$.
Furthermore, $f_m$ converges to $f$. Thus $f$ is convex.
\end{proof}

\begin{corollary} Suppose $X \subset \bold R^n$ is an open
convex set and $A \subset X$ is an open subset of $X$ bounded by a
real analytic codimension-1 submanifold in $X$. If $w=\sum_{i=1}^n
a_i(x) dx_i$ is a continuous closed 1-form on $A$ so that $F(x)
=\int^x_a w$ is locally convex on $A$  and each $a_i$ can be
extended continuously to $X$ by constant functions to a function
$\tilde{a_i}$ on $X$, then $\tilde{F}(x) = \int^x_a \sum_{i=1}^n
\tilde{a_i} dx_i$ is a $C^1$-smooth convex function on $X$
extending $F$.
 \end{corollary}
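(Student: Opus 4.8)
The plan is to form the extended $1$-form $\tilde{w} := \sum_{i=1}^n \tilde{a_i}\,dx_i$ on $X$ and then verify that it meets the hypotheses of Propositions 2.4 and 2.5 applied to it.

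First I would check that $\tilde{w}$ is a continuous closed $1$-form on $X$. Continuity is immediate, since by hypothesis each $\tilde{a_i}$ is a continuous function on $X$. By construction $\tilde{w}$ restricts to $w$ on $A$, which is closed by assumption. Each connected component $W$ of $X-\bar{A}$ is contained in a connected component of $X-A$, and on the latter every $\tilde{a_i}$ is a constant $c_i$; hence on $W$ we have $\tilde{w} = \sum_i c_i\,dx_i = d\big(\sum_i c_i x_i\big)$, which is closed on $W$. Thus $\tilde{w}|_A$ and $\tilde{w}|_{X-\bar{A}}$ are both closed, and since $A$ is bounded by a real analytic — in particular smooth — codimension-$1$ submanifold of $X$, Proposition 2.4 (with $\tilde{w}$ in place of $w$) gives that $\tilde{w}$ is closed on all of $X$.

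Next, because $X$ is convex, hence simply connected, the remark following Definition 2.3 shows that $\tilde{F}(x)=\int_a^x\tilde{w}$ is well defined, $C^1$-smooth on $X$, and satisfies $\partial\tilde{F}/\partial x_i=\tilde{a_i}$. Since $\tilde{w}|_A=w$, the functions $\tilde{F}$ and $F$ have the same differential on $A$ and share the base point of integration, so $\tilde{F}$ restricts to $F$ on $A$; in particular $\tilde{F}$ extends $F$. It then remains to prove $\tilde{F}$ is convex on $X$, for which I would invoke Proposition 2.5, whose hypotheses are now in place: $X$ is open convex, $A$ is open and bounded by a real analytic codimension-$1$ submanifold, $\tilde{w}$ is a continuous closed $1$-form on $X$, and $\tilde{F}=\int_a^x\tilde{w}$ is locally convex in $A$ (there it equals the locally convex function $F$) and locally convex in $X-\bar{A}$ (on each component of $X-\bar{A}$ it is, by the computation above, an affine function $\sum_i c_i x_i + \mathrm{const}$, which is convex). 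Proposition 2.5 then yields that $\tilde{F}$ is convex on $X$, and together with the previous sentence this proves the corollary.

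The argument is essentially an assembly of Propositions 2.4 and 2.5 around the extended form, so I do not expect a genuine obstacle; the two points that deserve care are (i) that extending the coefficients by \emph{constants} is exactly what forces $\tilde{w}$ to be closed — and $\tilde{F}$ to be affine, hence convex — on $X-\bar{A}$, which is precisely what the two propositions require, and (ii) the minor bookkeeping around the base point $a$ needed to identify $\tilde{F}|_A$ with $F$ itself rather than with $F$ plus a function that is merely locally constant on $A$.
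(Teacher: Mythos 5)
Your proposal is correct and is exactly the argument the paper intends: the corollary is stated without proof as an immediate consequence of Propositions 2.4 and 2.5, and your assembly (closedness of $\tilde{w}$ on $A$ and on $X-\bar{A}$ via constancy of the $\tilde{a_i}$, then Proposition 2.4, then Proposition 2.5 using that $\tilde{F}$ is affine on components of $X-\bar{A}$) is precisely that intended deduction. The two points of care you flag, including the base-point bookkeeping identifying $\tilde{F}|_A$ with $F$, are handled appropriately.
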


We remark that the real analytic assumption in the proposition 2.5
can be relaxed to $C^2$ smooth.

\section{A Proof of Bowers-Stephenson's Rigidity Conjecture}

We begin by recalling Guo's work on a variational principle
associated to inversive distance circle packings and then prove
theorem 1.4. We will work in Euclidean and hyperbolic geometries
only.

\subsection{Guo's variational principle for inversive distance
circle packing}

Suppose $\Delta$ is a triangle with vertices $v_1, v_2, v_3$ and
edges $e_{ij} = v_i v_j$, $ i \neq j$.  Fix once and for all an
inversive distance $I_{ij} \in [0, \infty)$ at each edge $e_{ij}$.
Then for each assignment of positive number $r_i$ at $v_i$ for
$i=1,2,3$,  let
\begin{equation} l_{k} =\sqrt{ r_i^2 + r_j^2 + 2 r_i r_j I_{ij}}
\end{equation} for
 Euclidean geometry and
 \begin{equation} l_{k} = \cosh^{-1} ( \cosh(r_i)
\cosh(r_j) + I_{ij} \sinh(r_i) \sinh(r_j)) \end{equation} for
hyperbolic geometry where $\{i,j,k\}=\{1,2,3\}$.

Let $\Omega =\{ (x_1, x_2, x_3)\in \bold R_{>0}^3 | x_i + x_j >
x_k , \{i,j,k\}=\{1,2,3\} \}$. If $(l_1, l_2, l_3)$ is in
$\Omega$, then we  construct a Euclidean triangle $\Delta$ with
length $l_k$ of $e_{ij}$  given by (3.1) and a hyperbolic
triangle, still denoted by $\Delta$, with  length $l_{k}$ of
$e_{ij}$ given by (3.2). Suppose the angle of the triangle at
$v_i$ is $\theta_i$ and consider
 $\theta_i$ as a
function of $(r_1, r_2, r_3)$.  Guo proved the following theorem
in \cite{Guo}.

\begin{theorem} (Guo \cite{Guo})  Fix any $(I_{12}, I_{23},
I_{31}) \in [0, \infty)^3$.

(1) For Euclidean triangles, let $u_i=\ln r_i$, then the
differential 1-form $w=\sum_{i=1}^3 \theta_i d u_i$ is closed in
the open subset of $\bold R^3$ where it is defined. The integral
$F(u)=\int_0^u w$ is a locally concave function in $u=(u_1, u_2,
u_3)$ and is strictly locally concave in $u_1+u_2+u_3=0$.
Furthermore, if $c \in \bold R$ and $F(u)$ is defined, then
$F(u+(c,c,c))=F(u)$.

(2)  For hyperbolic triangles, let $u_i=\ln (\tanh(r_i/2))$, then
the differential 1-form $w=\sum_{i=1}^3 \theta_i d u_i$ is closed
in the open subset of $\bold R_{<0}^3$ where it is defined.
Furthermore, the integral $F(u)=\int_{-(1,1,1)}^u w$ is a strictly
locally concave function in $u=(u_1, u_2, u_3)$.
\end{theorem}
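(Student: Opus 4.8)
The plan is to reduce the whole theorem to the analysis of a single explicit $3\times 3$ matrix, the Jacobian
$$\Lambda(u)\;=\;\Big[\,\frac{\partial\theta_i}{\partial u_j}\,\Big]_{1\le i,j\le 3},$$
viewed as a function on the open set $U$ (inside $\bold R^3$, resp. inside $\bold R_{<0}^3$ since $u_i=\ln\tanh(r_i/2)<0$) on which the lengths produced by (3.1), resp. (3.2), satisfy the triangle inequalities, so that the $\theta_i$ are smooth functions of $u=(u_1,u_2,u_3)$. Because $w=\sum_i\theta_i\,du_i$ has $C^1$ coefficients, $dw=\sum_{i<j}(\partial_j\theta_i-\partial_i\theta_j)\,du_j\wedge du_i$; hence \emph{$w$ is closed on $U$ exactly when $\Lambda$ is symmetric}, and when it is, $F=\int_{u_0}^u w$ is well defined (at least locally, and on any simply connected piece of $U$), is $C^1$, and has Hessian $\Lambda$. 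So \emph{local (strict) concavity of $F$ is the same as $\Lambda\preceq 0$ (resp. $\Lambda\prec 0$)}, and ``$\Lambda\preceq 0$ with one-dimensional kernel'' is precisely the mixed assertion of (1). Thus everything comes down to: $\Lambda$ is symmetric; it is negative semidefinite with kernel $\bold R(1,1,1)$ for Euclidean triangles; and it is negative definite for hyperbolic triangles.

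To obtain $\Lambda$ I would differentiate through the edge lengths: by the chain rule $\partial\theta_i/\partial u_j=\sum_k(\partial\theta_i/\partial l_k)(\partial l_k/\partial u_j)$, where $\partial\theta_i/\partial l_k$ is read off from the cosine laws (2.1)--(2.2) and $\partial l_k/\partial u_j$ from differentiating (3.1)--(3.2) together with the substitution $r_j=e^{u_j}$, resp. $\tanh(r_j/2)=e^{u_j}$. Carrying out this routine (but not short) computation, I expect $\Lambda$ to emerge in ``discrete Laplacian'' form,
$$\Lambda_{ij}=\Lambda_{ji}=w_{ij}\ \ (i\ne j),\qquad \Lambda_{ii}=-\Big(\sum_{j\ne i}w_{ij}\Big)-\kappa_i,$$
with $\kappa_i=0$ for Euclidean triangles and $\kappa_i\ge 0$ for hyperbolic triangles, the coefficients $w_{ij},\kappa_i$ being explicit geometric quantities associated with the power (radical) centre of the three circles --- this is the picture behind \cite{Gl}. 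Symmetry of $\Lambda$ is then evident, settling closedness; in the Euclidean case the rows of $\Lambda$ sum to zero, which is the infinitesimal form of $\theta_i(u+(c,c,c))=\theta_i(u)$ (scale-invariance of Euclidean angles), and from this plus $\sum_i\theta_i\equiv\pi$ the translation statement for $F$ in (1) follows; for hyperbolic triangles there is no ambient scaling, the rows of $\Lambda$ no longer sum to zero, and the defect is exactly the $-\kappa_i$.

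With $\Lambda$ in Laplacian form, semidefiniteness is formally the classical fact that a weighted graph Laplacian on the triangle $\{1,2,3\}$ is positive semidefinite with kernel $\bold R(1,1,1)$, and becomes positive definite once the nonnegative diagonal $\mathrm{diag}(\kappa_1,\kappa_2,\kappa_3)$ is added in the hyperbolic case (the triangle being connected). \textbf{The main obstacle is here}: this clean argument needs all off-diagonal weights to satisfy $w_{ij}=\partial\theta_i/\partial u_j\le 0$ for $i\ne j$, which is standard when adjacent circles meet (inversive distance in $[0,1]$) but is genuinely delicate in the disjoint-circle range $I_{ij}>1$ allowed by $I\in[0,\infty)^3$. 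So the real content is a direct estimate, uniform over $(r_1,r_2,r_3)\in\Omega$ and over $(I_{12},I_{23},I_{31})\in[0,\infty)^3$, showing either that the $w_{ij}$ remain nonpositive in the chosen coordinates $u_i=\ln r_i$ (resp. $u_i=\ln\tanh(r_i/2)$), or, should that fail, that the relevant principal minors of $-\Lambda$ still have the correct signs. I would organize this by expressing the entries of $-\Lambda$ as rational, resp. trigonometric-rational, functions of $r$ and $I$ through the cosine law, clearing denominators, and reducing the needed minor inequalities to polynomial positivity statements that can be checked by hand.
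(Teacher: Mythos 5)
Your reduction is the right frame --- closedness of $w$ is the symmetry of the Jacobian $\Lambda=[\partial\theta_i/\partial u_j]$, and local (strict) concavity of $F$ is negative (semi)definiteness of $\Lambda$, with kernel $\bold R(1,1,1)$ in the Euclidean case --- and this is indeed how the cited result is organized. But note first that this statement is Guo's theorem, which the paper does not reprove: it is quoted from \cite{Guo} and used as input. Measured against that, your proposal has a genuine gap exactly where you flag ``the main obstacle'': the definiteness of $\Lambda$ uniformly over $(I_{12},I_{23},I_{31})\in[0,\infty)^3$ is the entire content of the theorem, and you leave it as a plan (``reduce the needed minor inequalities to polynomial positivity statements that can be checked by hand'') rather than an argument. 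Worse, the route you sketch cannot work as stated in the regime $I_{ij}>1$: writing $x^{T}\Lambda x=-\sum_{i<j}w_{ij}(x_i-x_j)^2-\sum_i\kappa_i x_i^2$ shows the graph-Laplacian argument needs the off-diagonal entries $w_{ij}=\partial\theta_i/\partial u_j$ to be \emph{nonnegative} (your stated inequality $w_{ij}\le 0$ has the sign backwards), and this entrywise positivity --- which does hold in the Thurston range $I\in[0,1]$, cf. \cite{CL} --- genuinely fails for disjoint circles: in Glickenstein's interpretation \cite{Gl} these coefficients are signed distances to edges from a power center that can lie outside the triangle. That failure is precisely why Guo's proof establishes negative (semi)definiteness by a direct algebraic analysis of the Jacobian (determinant and minor computations after factoring through the length variables), not by entrywise sign control, and why none of the ``clean'' Laplacian reasoning can be quoted. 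So the core of the theorem is not proved in your proposal.

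Two smaller points. Your derivation of the translation property in (1) from $\sum_i\theta_i\equiv\pi$ does not give $F(u+(c,c,c))=F(u)$ but rather $F(u+(c,c,c))=F(u)+\pi c$ (the integrand restricted to the diagonal direction is $\sum_i\theta_i$), so as written it proves a different identity; the invariance of the angles $\theta_i(u+(c,c,c))=\theta_i(u)$ is the correct scale-invariance statement, and matching the exact normalization of $F$ requires care. Also, well-definedness of $F=\int^u w$ on all of the domain (not just locally) uses that the set where the triangle inequalities hold in the $u$-coordinates is connected and simply connected, which is a separate fact proved in \cite{Guo} and quoted in the paper; your parenthetical ``on any simply connected piece'' silently assumes it.
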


It is also proved in \cite{Guo} that the open sets where the
1-forms $w$ are defined in theorem 3.1 are connected and simply
connected.  Theorem 3.1 is a generalization of an earlier result
obtained in \cite{CL}. Guo proved a local and infinitesimal
rigidity theorem for inversive distance circle packing using
theorem 3.1. It says that a Euclidean inversive distance circle
packing is locally determined, up to scaling, by the discrete
curvature of the underlying polyhedral surface. He also proved the
local and infinitesimal rigidity for hyperbolic inversive distance
circle packings.

\subsection{Concave extension of Guo's action
functional}
 Our main observation is that Guo's
differential 1-forms $w = \sum_{i=1}^3 \theta_i du_i$ can be
extended to a closed 1-form on $\bold R^3$ in the Euclidean case
and on $\bold R^3_{<0}$ in  the hyperbolic case so that the
integrations of the extended 1-forms are still concave.

\begin{proposition}
Let $w$ be the 1-forms defined in theorem 3.1.

(a) In the case of Euclidean triangles, the 1-form $w$ can be
extended to a continuous closed  1-form $\tilde{w}$ on $\bold R^3$
so that the integration $\tilde{F}(u) = \int^u_0 \tilde{w}$ is a
$C^1$-smooth concave function.

(b) In the case of hyperbolic triangles, the 1-form $w$ can be
extended to a continuous closed  1-form $\tilde{w}$ on $\bold
R_{<0}^3$ so that the integration $\tilde{F}(u) =
\int^u_{-(1,1,1)} \tilde{w}$ is a $C^1$-smooth concave function.
\end{proposition}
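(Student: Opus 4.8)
The plan is to deduce the proposition from Corollary 2.6 applied to the $1$-form $-w$, after checking the two hypotheses of that corollary that are not already contained in Guo's Theorem 3.1. Set $X=\mathbf{R}^3$ in the Euclidean case and $X=\mathbf{R}^3_{<0}$ in the hyperbolic case; both are open and convex. Define $\Phi\colon X\to\mathbf{R}^3_{>0}$ by $\Phi(u)=(l_1(u),l_2(u),l_3(u))$, where in the Euclidean case $r_i=e^{u_i}$ and $l_k$ is given by (3.1), and in the hyperbolic case $r_i=2\tanh^{-1}(e^{u_i})$ (which is positive precisely because $u_i<0$) and $l_k$ is given by (3.2). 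Since $I_{ij}\geq 0$ and $r_i>0$, the radicand in (3.1) is at least $r_i^2+r_j^2>0$ and the argument of $\cosh^{-1}$ in (3.2) is at least $\cosh(r_i)\cosh(r_j)>1$, so $\Phi$ is a well-defined real-analytic map into $\mathbf{R}^3_{>0}$. Let $\Omega\subset\mathbf{R}^3_{>0}$ be the natural domain of length vectors of Lemma 2.2 and put $A=\Phi^{-1}(\Omega)$. This open set is exactly the domain on which $w=\sum_{i=1}^3\theta_i\,du_i$ is defined; by Theorem 3.1 it is connected and simply connected, $w$ is closed on $A$, and $F=\int w$ is locally concave on $A$, so $-w=\sum_{i=1}^3(-\theta_i)\,du_i$ is closed on $A$ with $\int(-w)=-F$ locally convex.

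For the first hypothesis I would show that $A$ is bounded in $X$ by a real-analytic codimension-$1$ submanifold. For $\{i,j,k\}=\{1,2,3\}$ put $g_{ijk}(u)=l_i(u)-l_j(u)-l_k(u)$. A point of $X$ is carried by $\Phi$ into $\bar\Omega-\Omega$ exactly when some triangle inequality degenerates, and since $l_1=l_2+l_3$ together with $l_2=l_1+l_3$ would force $l_3=0\notin\mathbf{R}_{>0}$, the three loci $\{g_{ijk}=0\}$ are pairwise disjoint; one checks $\bar A-A=\{g_{123}=0\}\cup\{g_{231}=0\}\cup\{g_{312}=0\}$. Each $g_{ijk}$ is real-analytic on $X$, and its gradient vanishes nowhere: for instance $\partial g_{123}/\partial u_1=-\partial l_2/\partial u_1-\partial l_3/\partial u_1<0$, because differentiating (3.1) or (3.2) shows $l_2$ and $l_3$ are strictly increasing functions of $r_1$ when $I_{ij}\geq 0$. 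Hence $\bar A-A$ is a real-analytic codimension-$1$ submanifold of $X$.

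For the second hypothesis I would invoke Lemma 2.2: in the length coordinates $\theta_i$ extends continuously by constant functions to a function $\tilde\theta_i$ on $\mathbf{R}^3_{>0}$ taking the value $0$ or $\pi$ on each of the three pairwise-disjoint closed convex cones $C_m=\{l\in\mathbf{R}^3_{>0}:l_i\geq l_j+l_k\}$, which are the connected components of $\mathbf{R}^3_{>0}-\Omega$. Then $\tilde\theta_i\circ\Phi$ is continuous on $X$, agrees with $\theta_i$ on $A$, and is constant on each $\Phi^{-1}(C_m)$; since the $\Phi^{-1}(C_m)$ are pairwise disjoint closed sets whose union is $X-A$, every connected component of $X-A$ lies in a single $\Phi^{-1}(C_m)$, on which $\tilde\theta_i\circ\Phi$ is constant. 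Thus each $-\theta_i$ extends continuously by constant functions to $X$. Now Corollary 2.6 applied to $-w$ on $A$ produces a $C^1$-smooth convex function on $X$ extending $-F$, namely the integral of $\sum_{i=1}^3(-\tilde\theta_i\circ\Phi)\,du_i$; negating it, $\tilde F:=\int\sum_{i=1}^3(\tilde\theta_i\circ\Phi)\,du_i$ is a $C^1$-smooth concave function on $X$ extending $F$, and $\tilde w:=d\tilde F=\sum_{i=1}^3(\tilde\theta_i\circ\Phi)\,du_i$ is the required continuous closed $1$-form on $X$ extending $w$. Since $\tilde w$ is closed on the convex set $X$, normalizing the base point of integration to $0$ in the Euclidean case and to $-(1,1,1)$ in the hyperbolic case only changes $\tilde F$ by a constant, which gives exactly the statement.

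I expect the main obstacle to be careful bookkeeping rather than genuine difficulty: one must verify that the "extension by constant functions" property from Lemma 2.2 survives composition with $\Phi$, which is precisely where the disjointness of the three degenerate cones $C_m$ is used, and one must confirm that $\bar A-A$ is a real-analytic hypersurface, which reduces to the elementary sign computation that the edge lengths (3.1)–(3.2) are monotone in the radii when $I_{ij}\geq 0$. With those two points settled, the proposition is a direct application of Theorem 3.1, Lemma 2.2, and Corollary 2.6.
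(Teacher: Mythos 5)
Your proposal is correct, and it follows the paper's overall strategy (extend the coefficients $\theta_i$ via Lemma 2.2 and feed the result into Corollary 2.6), but it verifies the hypotheses of Corollary 2.6 by a noticeably lighter route than the paper's Lemma 3.3. The paper proves, via monotonicity of the lengths in the radii plus the implicit function theorem, that each wall $\partial_k U=\{l_k=l_i+l_j\}$ is the graph of a smooth function $f(r_i,r_j)$ and that each bad region $V_k=\{l_k\ge l_i+l_j\}$ is the subgraph region; this yields both the smooth/analytic boundary and the fact that the $V_k$ are exactly the connected components of the complement, on which the extended angles are constant. You sidestep the connectedness statement entirely by observing that the sets $\Phi^{-1}(C_m)$ are pairwise disjoint closed sets covering $X-A$, hence relatively clopen there, so every connected component of $X-A$ lies in a single one and constancy of $\tilde\theta_i\circ\Phi$ on components is automatic; and you replace the graph description by a regular-value argument, namely that $g_{ijk}=l_i-l_j-l_k$ is real analytic with nowhere-vanishing gradient (again by monotonicity of $l$ in $r$ when $I\ge 0$), so each wall is a real-analytic hypersurface and the walls are disjoint. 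Both arguments hinge on the same elementary monotonicity computation; the paper's version gives more structural information (explicit graphs, and emptiness of the wall when $I_{ij}\le 1$) that is not actually needed for Corollary 2.6, while yours isolates exactly what the corollary requires. The only place where you wave a hand is the equality $\bar A-A=\bigcup_{\{i,j,k\}}\{g_{ijk}=0\}$: the inclusion $\supseteq$ needs the small perturbation argument (increase $r_i$ to make $l_j,l_k$ grow while $l_i$ stays fixed, and note the other two triangle inequalities are strict on the wall), which your monotonicity computation supplies, so this is a minor gap of exposition rather than substance.
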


We begin by focusing the 1-forms in its radius coordinate $r=(r_1,
r_2, r_3) \in \bold R_{>0}^3$. In this case, the 1-forms are given
by $w=\sum_{i=1}^3 \theta_i \frac{d r_i}{r_i}$ and $w=\sum_{i=1}^3
\theta_i \frac{ dr_i}{\sinh(r_i)}$. The 1-form $w$ is defined on
the open set $U$ of $\bold R_{>0}^3$ where
\begin{equation}
 U= \{ (r_1, r_2, r_3) \in \bold R_{>0}^3 |
l_i+l_j > l_k, \{ i,j,k \}=\{1,2,3\} \},
\end{equation}
where $l_i = l_i(r_1, r_2,r_3)$ is defined on $\bold R_{>0}^3$.
(Note that for hyperbolic and Euclidean geometries, the sets $U$
are different due to (3.1) and (3.2)). The extension of the 1-form
$w$ is the natural one. Namely, we replace $\theta_i$ in $w$ by
$\tilde{\theta_i}$ appeared in lemma 2.1. Thus the extended 1-form
is $\tilde{w} =\sum_{i=1}^3 \tilde{\theta_i} \frac{d r_i}{r_i}$ or
$\tilde{w} =\sum_{i=1}^3 \tilde{\theta_i} \frac{d
r_i}{\sinh(r_i)}.$

It remains to show that $\tilde{w}$ is continuous and closed in
$\bold R_{>0}^3$ so that its pull back to the $u$-coordinate has a
concave integration. To this end, we prove,

\begin{lemma}   Let $\bar{U}$ be the closure of $U$ in
$\bold R_{>0}^3$. Then,

(1) $\theta_i$ is a constant function on each connected component
of $\bar{ U} - U$, and

(2) for each connected component $V$ of $\bold R_{>0}^3-U$, the
intersection $V \cap \bar{U}$ is a connected component of $\bar{U}
-U$.

\end{lemma}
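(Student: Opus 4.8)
The plan is to analyze the boundary structure of the set $U$ by examining which of the three triangle inequalities $l_i + l_j > l_k$ fails, and to exploit the monotonicity of $l_k$ as a function of $r_i, r_j$ (with $I_{ij} \geq 0$). First I would observe from (3.1) and (3.2) that for fixed inversive distances $I_{ij} \in [0,\infty)$, each $l_k = l_k(r_i, r_j)$ is a strictly increasing function of each of $r_i$ and $r_j$ separately, is independent of $r_k$, and tends to $0$ as $r_i, r_j \to 0$ while tending to $\infty$ as either variable does. Write $f_k(r) = l_i(r) + l_j(r) - l_k(r)$ for the $k$-th defining expression, so that $U = \{ f_1 > 0, f_2 > 0, f_3 > 0 \}$. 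The key elementary fact I would prove is that \emph{at most one} of $f_1, f_2, f_3$ can be nonpositive at any point of $\bold R_{>0}^3$: if $l_1 \geq l_2 + l_3$ and also $l_2 \geq l_1 + l_3$, adding gives $l_3 \leq 0$, impossible. Hence $\bold R_{>0}^3 - U$ decomposes as a disjoint union $W_1 \cup W_2 \cup W_3$ where $W_k = \{ r : l_k \geq l_i + l_j \}$, and these are closed, pairwise disjoint sets; each connected component $V$ of $\bold R_{>0}^3 - U$ lies in a single $W_k$.

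For part (1), on the component $W_k$ (or any connected component thereof) I claim $\theta_k = \pi$ and $\theta_i = \theta_j = 0$, which are constants. Indeed, by Lemma 2.1 the extended angle function $\tilde\theta_k$ equals $\pi$ exactly when $l_k \geq l_i + l_j$ and $\tilde\theta_i = 0$ when $l_k \geq l_i + l_j$ (the degenerate triangle collapses onto the long edge); so on the interior of $W_k$ these values hold, and on the part of $\bar U - U$ adjacent to $W_k$, continuity of $\tilde\theta_i$ forces the same boundary values. More carefully, $\bar U - U$ is contained in $\{ f_1 = 0 \} \cup \{ f_2 = 0 \} \cup \{ f_3 = 0 \}$ intersected with $\bar U$; on the locus $f_k = 0$ the cosine law (2.1) or (2.2) gives $\cos\theta_k \to -1$, $\cos\theta_i, \cos\theta_j \to 1$, so $\theta_k = \pi$, $\theta_i = \theta_j = 0$ there. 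Since a connected component of $\bar U - U$ can meet $\{f_k = 0\}$ for only one value of $k$ (again because two of the $f$'s cannot simultaneously vanish with the third equality-or-inequality, by the same additivity argument, except possibly on a lower-dimensional set which I would need to check does not disconnect things), the angle functions are constant on it.

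For part (2), I must show each connected component $V$ of $\bold R_{>0}^3 - U$ satisfies: $V \cap \bar U$ is a single connected component of $\bar U - U$. Since $V \subset W_k$ for a unique $k$, the boundary $\partial V$ relative to $\bold R_{>0}^3$ lies in $\{ f_k = 0 \}$ (the hypersurface $l_k = l_i + l_j$ cannot be crossed into another $W_{k'}$ without passing through $U$, as the $W$'s are disjoint and closed). So $V \cap \bar U \subseteq \{ f_k = 0 \} \cap \bar U$, and I would show this is connected by showing $W_k$ itself is connected and its topological boundary within $\bold R_{>0}^3$ is connected: using that $l_k$ depends only on $r_k$... no — $l_k$ depends on $r_i, r_j$ — I would instead parametrize $\{ f_k = 0 \}$ by, say, $(r_i, r_j)$ and solve for $r_k$, noting $f_k$ is monotonic in $r_k$ (since $l_i, l_j$ increase in $r_k$ while $l_k$ is independent of it), so $\{ f_k = 0 \}$ is a graph $r_k = g(r_i, r_j)$ over a region in the $(r_i, r_j)$-plane, hence connected; similarly $W_k = \{ r_k \geq g(r_i,r_j) \}$ is connected, and $V = W_k$ (there is only one component). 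Then $V \cap \bar U = \{ r_k = g(r_i, r_j) \}$, which is connected and equals a full component of $\bar U - U$.

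The main obstacle I anticipate is the bookkeeping in part (2): verifying that each $W_k$ really is a \emph{single} connected component of $\bold R_{>0}^3 - U$ (not several), and that its frontier coincides exactly with one component of $\bar U - U$ rather than meeting several. This requires the monotonicity/graph description of the hypersurface $f_k = 0$ and a careful argument that the region over which $g$ is defined (i.e., where $l_i + l_j$ can be made to exceed $l_k$ by choosing $r_k$ large, versus where even $r_k \to \infty$ does not suffice) is itself connected — which follows because, holding $r_i, r_j$ fixed and letting $r_k \to \infty$, $l_i + l_j \to \infty$ in both geometries (Euclidean immediately from (3.1); hyperbolic from (3.2), since $l_i \sim r_k$ asymptotically), so $g$ is defined on \emph{all} of $\bold R_{>0}^2$ in the Euclidean case and one checks the hyperbolic case similarly. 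Once that is in hand, the connectedness statements are immediate.
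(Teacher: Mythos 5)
Your strategy is essentially the paper's: split $\mathbf{R}^3_{>0}-U$ into the three pieces $W_k=\{l_k\ge l_i+l_j\}$, prove pairwise disjointness by adding two failed triangle inequalities, get part (1) from the constancy of the extended angles on each locus $\{l_k=l_i+l_j\}$ (here your hedge about a ``lower-dimensional set'' is unnecessary --- the same additivity argument shows the loci are exactly disjoint), and get part (2) by describing $\{l_k=l_i+l_j\}$ as a graph $r_k=g(r_i,r_j)$, using that $f_k=l_i+l_j-l_k$ is strictly monotone in $r_k$ because $l_k$ does not involve $r_k$. Parts (1) and the disjointness are fine and match the paper.

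The gap is in the pivotal graph step. Having correctly observed that $f_k$ is strictly \emph{increasing} in $r_k$, the set $W_k=\{f_k\le 0\}$ is the region \emph{below} the graph, $\{0<r_k\le g(r_i,r_j)\}$, not $\{r_k\ge g\}$ as you wrote; and, more seriously, the existence of the zero $g(r_i,r_j)$ has to be checked at the $r_k\to 0$ end, not at $r_k\to\infty$. Your justification --- that $l_i+l_j\to\infty$ as $r_k\to\infty$ --- holds for every $(r_i,r_j)$ and every $I$, so it cannot detect whether $f_k$ ever changes sign; it only shows $f_k>0$ eventually. A zero exists for given $(r_i,r_j)$ precisely when $\lim_{r_k\to 0^+}f_k=r_i+r_j-l_k<0$, i.e.\ exactly when $I_{ij}>1$ (Euclidean: $r_i+r_j<\sqrt{r_i^2+r_j^2+2I_{ij}r_ir_j}$; hyperbolic: $\cosh(r_i+r_j)<\cosh r_i\cosh r_j+I_{ij}\sinh r_i\sinh r_j$, both iff $I_{ij}>1$). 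This dichotomy --- $W_k=\emptyset$ and $\partial_kU=\emptyset$ when $I_{ij}\le 1$, while for $I_{ij}>1$ the graph is defined over \emph{all} of $\mathbf{R}^2_{>0}$, making both the frontier and $W_k$ connected --- is exactly the content of the paper's proof at this point, and your proposal never brings the threshold $I_{ij}>1$ into play; as written, the claim that $g$ is defined on all of $\mathbf{R}^2_{>0}$ is false when $I_{ij}\le 1$ and unproved when $I_{ij}>1$. The fix is the paper's computation: compare $l_k$ with the value $r_i+r_j$ of $l_i+l_j$ at $r_k=0^+$, use monotonicity (and, for smoothness of $g$, the implicit function theorem via $\partial(l_i+l_j)/\partial r_k>0$), and then conclude connectedness of $\partial_kU$ and $W_k$ as you intended.
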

\begin{proof}
By (3.3), the boundary $\partial U =\bar{ U} - U$ is given by
$\cup_{i=1}^3 \partial_i U$ where $\partial_i U =\{(r_1, r_2, r_3)
\in \bold R_{>0}^3 | l_i=l_j+l_k, \{j,k\}=\{1,2,3\}-\{i\} \}$.
Furthermore,  $\bold R_{>0}^3 -U =\cup_{i=1}^3 V_i$ where $V_i =
\{(r_1, r_2, r_3) \in \bold R_{>0}^3 | l_i \geq l_j+l_k,
\{j,k\}=\{1,2,3\}-\{i\} \}.$

First, we note that if $I_{ij} \leq 1$, then $\partial_k U =
\emptyset $ and $V_k =\emptyset$. Indeed, if $I_{ij} \leq 1$, then
by (3.1) and (3.2),
$$l_k \leq r_i + r_j.$$  But due to $I_{ab} \geq 0$, (3.1) and (3.2), $r_j < l_i$ and
$r_i < l_j$. Therefore, $l_k < l_i +l_j$. This implies that
$\partial_k U = \emptyset$ and $V_k = \emptyset$.

Next $\partial_i U \cap
\partial_j U = \emptyset$  and $V_i \cap V_j =\emptyset$
for $i \neq j$. Indeed, if $r \in \partial_i
U \cap
\partial_j U$ or $r \in V_i \cap V_j$, then $l_i \geq
l_j+l_k$ and $l_j \geq l_i+l_k$. Thus $l_k=0$. But $l_k > r_i
>0$.

We claim that if $I_{ij} > 1$, then both $V_k$ and $\partial_k U$
are non-empty and connected.  Assume the claim, then the lemma
follows. Indeed, since $l_s>0$ for all indices $s$, it follows, by
lemma 2.1, that $\theta_i$ is either $0$ or $\pi$ in $\partial_s
U$, i.e., (1) holds. Next, $V_s$'s are the connected components of
$\bold R_{>0}^3-U$ so that $V_s \cap \bar{U} = \partial_s U$. Thus
(2) holds.

To see the claim, it suffices to show that there is a smooth
function $f(r_i, r_j)$ defined on $\bold R_{>0}^3$ so that its
graph is $\partial_k U$ and $V_k=\{ (r_1, r_2, r_3) \in  \bold
R_{>0}^3 | 0 < r_3 \leq f(r_1, r_2)\}$.

To this end, consider the equation
\begin{equation}
l_k = l_i + l_j,
\end{equation} and let the right-hand-side of
(3.4) be $g(r_k, r_i, r_j)$.  We will deal with the Euclidean and
hyperbolic geometry separately.

CASE 1 Euclidean triangles. In this case, the function $g(r_k,r_i,
r_j)$ is given by

\begin{equation}
g(r_k, r_i, r_j) = \sqrt{ r_k^2 + r_j^2 + 2I_{kj} r_kr_j }+ \sqrt{
r_i^2 + r_k^2 + 2I_{ik} r_ir_k }
\end{equation}
Evidently, for a fixed $(r_i, r_j) \in \bold R^2_{>0}$,  $g(r_k,
r_i, r_j)$ is a strictly increasing function of $r_k \in \bold
R_{>0}$ so that $g(0, r_i, r_j) =r_i+r_j < \sqrt{ r_i^2 + r_j^2 +
2I_{ij} r_ir_j }$ (due to $I_{ij} > 1$) and $\lim_{ r_k \to
\infty} g(r_k, r_i, r_k) = \infty$. By the mean-value theorem,
there exists a unique positive number $f(r_i, r_j)$ so that
$g(f(r_i, r_j), r_i, r_j)= \sqrt{ r_i^2 + r_j^2 + 2 r_i r_j
I_{ij}} =l_k$. The smoothness of $f(r_i, r_j)$ follows from the
implicit function theorem applied to (3.4). Indeed,
$$ \frac{\partial g}{\partial r_k} = \frac{ r_k + 2 I_{kj}r_j}{l_i} + \frac{r_k+ 2 I_{ik}
r_i}{l_j} >0.$$ Thus, $f(r_i, r_j)$ is smooth.

This shows $\partial_k U$ is the graph of the smooth function $f$
defined on $\bold R_{>0}^2$, i.e., $$\partial_k U =\{(r_1, r_2,
r_3) \in \bold R_{>0}^3 | r_k = f(r_i, r_j)\}.$$ Thus it is
connected. Since $g(r_k,r_i, r_j)$ is an increasing function of
$r_k$,  $V_k =\{ r \in R_{>0}^3 | 0<r_k \leq f(r_i, r_j),
\{i,j\}=\{1,2,3\}-\{k\} \}$. Thus $V_k$ is connected.

CASE 2 hyperbolic triangles. By the same argument as in case 1, it
suffices to show the same properties established in case 1 hold
for $g(r_k, r_i, r_j)$ given by
\begin{equation}
\cosh^{-1} ( \cosh(r_i) \cosh(r_k) + I_{ik} \sinh(r_i)
\sinh(r_k))+\cosh^{-1} ( \cosh(r_k) \cosh(r_j) + I_{kj} \sinh(r_k)
\sinh(r_j)).
\end{equation}
Fix $(r_i, r_j) \in \bold R^2_{>0}$. Then the function $g(r_k,
r_i, r_j)$ is clearly strictly increasing in $r_k \in \bold
R_{>0}$ so that $\lim_{r_k \to \infty} g(r_k, r_i, r_j) =\infty$
and due to $I_{ij} >1$,

 $$g(0, r_i,r_j) =r_i + r_j$$
 $$=\cosh^{-1}( \cosh(r_i+r_j))$$
$$ = \cosh^{-1}( \cosh(r_i)\cosh(r_j) + \sinh(r_i) \sinh(r_j))$$
$$ < \cosh^{-1}( \cosh(r_i) \cosh(r_j) + I_{ij} \sinh(r_i)
\sinh(r_j)) =l_k.$$

By the mean value theorem, there exists a unique positive number
$f(r_i, r_j)$ so that $g(f(r_i, r_j), r_i, r_j)=l_k$.  The
smoothness of $f(r_i, r_j)$ follows form the implicit function
theorem that
$$\frac{\partial g}{\partial r_k} =
\frac{ \cosh(r_i)\sinh(r_k) +
I_{ik}\sinh(r_i)\cosh(r_k)}{\sqrt{(\cosh(r_i)\cosh(r_k) +
I_{ik}\sinh(r_i)\sinh(r_k))^2-1}} $$ $$+ \frac{
\cosh(r_j)\sinh(r_k)+
I_{jk}\sinh(r_j)\cosh(r_k)}{\sqrt{(\cosh(r_j)\cosh(r_k)+
I_{jk}\sinh(r_j)\sinh(r_k))^2-1}}
$$ $$>0.$$
By the same argument as in case 1, we see that $\partial_k U$,
being the graph of the smooth function $f$, is connected and
$V_k$, being the region below the positive function $f$ over
$\bold R^2_{>0}$, is also connected.
\end{proof}

Now back to the proof of proposition 3.2, for part (1), consider
the real analytic diffeomorphism $u=u(r): \bold R^3_{>0} \to \bold
R^3$ where $u_i = \ln r_i$. The differential 1-form
$w=\sum_{i=1}^3 \theta_i \frac{dr_i}{r_i}$ pulls back (via
$r=u^{-1}(r)$) to $w = \sum_{i=1}^3 \theta_i du_i$ as appeared in
theorem 3.1. By lemma 3.3, the extension $\tilde{w} = \sum_{i=1}^3
\tilde{\theta_i} du_i$ is obtained from $w$ by extending each
coefficient $\theta_i$ by constant functions on $\bold R^3 -
u^{-1}(U)$. Thus, by corollary 2.6, the function $\tilde{F}(u) =
\int^u_0 \tilde{w}$ is a $C^1$-smooth concave function in $u \in
\bold R^3$ so that
\begin{equation}
\partial \tilde{F}/\partial u_i = \tilde{\theta_i}.
\end{equation}

The same argument also works for part (2) since $u=u(r)$ with $u_i
= \ln \tanh(r_i)$ is a real analytic diffeomorphism from $\bold
R_{>0}^3$ onto $\bold R_{<0}^3$.

\subsection{A proof of theorem 1.4 for Euclidean inversive distance circle packing
} Suppose otherwise that there exist two inversive circle packing
metrics $d_1, d_2$  on $(S, T)$ with the same inversive distance
$I \in \bold [0, \infty)^E$ so that their discrete curvatures are
the same and $d_1 \neq \lambda d_2$ for any $\lambda$. Let $a \in
\bold R^V$ be their common discrete curvature.

We will use the notation that if $i \in V$ and $x \in \bold R^V$,
then $x_i =x(i)$ below. Let $T^{(2)}$ be the set of all triangles
in $T$. If a triangle $s \in T^{(2)}$ has vertices $i,j,k \in V$,
then we denote the triangle by $s=\{i,j,k\}$.  For circle packing
metrics of radii $r \in \bold R_{>0}^V$ with a given inversive
distance $I$, we use $u \in \bold R^V$ to denote their logarithm
coordinate where $u_i =\ln r_i$. Thus, there are two points $p, q$
in $\bold R^V$ as the logarithmic coordinates of $d_1$ and $d_2$
so that their discrete curvatures are $a \in \bold R^V$ and $p-q
\neq \lambda(1,1,1,..,1)$ for any $\lambda$.

We will derive a contradiction by using the locally concave
functions $F$ and its concave extension $\tilde{F} =\int^u_0
\tilde{w}$ appeared in  proposition 3.2 associated to theorem
3.1(1).

Define a $C^1$-smooth function $W: \bold R^V \to \bold R$ by
\begin{equation}
W(u) = -\sum_{s \in T^{(2)}, s=\{i,j,k\}, i,j,k \in V}
\tilde{F}(u_i, u_j, u_k) + \sum_{ i \in V} (2\pi -a_i) u_i.
\end{equation}

The function $W$ is convex since it is a summation of convex
functions. Furthermore, by the definition of $W$, (3.7), the
definition of discrete curvature $(a_i)$, $p$ and $q$ are both
critical points of $W$. Since $W$ is convex in $\bold R^V$, $p$
and $q$ are both minimal points of $W$. Furthermore, for all $t
\in [0,1]$, $tp+(1-t)q$ are minimal points of $W$. In particular,
$$ W(tp + (1-t)q) = W(p)$$ for all $ t \in [0, 1]$. Since
$$W(tp+(1-t)q) = \sum_{s \in T^{(2)}, s=\{i,j,k\}, i,j,k \in V}
f_{ijk}(t) + \sum_{ i \in E} (2\pi- a_i) (tp_i + (1-t) q_i)$$
where the function \begin{equation} f_{ijk}(t) =-\tilde{F}(tp_i +
(1-t) q_i, tp_j + (1-t)q_j , tp_k +(1-t) q_k)
\end{equation} is convex, it follows that $f_{ijk}(t)$ is linear
in $t \in [0, 1]$ for all triangle $s$ with vertices $i,j,k$. This
is due to the simple fact that a summation of a convex function
with a strictly convex function is strictly convex. By the
assumption that $p-q \neq c(1,1,....,1)$ in $\bold R^V$ and the
surface is connected,  there exists a triangle $s$ with vertices
$i,j,k \in V$ so that $(p_i, p_j, p_k) -(q_i, q_j, q_k) \neq
(c,c,c)$ for all $c \in \bold R$. By the given assumption,
$(p_i,p_j,p_k)$ and $(q_i, q_j, q_k)$ are in the domain of
definition of $w$ in theorem 3.1.  Thus for $t \in [0, 1]$ close
to $0$ or $1$, by theorem 3.1 on the local strictly convexity of
$-F(u_1, u_2, u_3)$ on $u_1+u_2+u_3=0$ and $F(u+(c,c,c))=F(u)$,
$f_{ijk}(t)$ is strictly convex in $t$ near $0, 1$. This is a
contradiction to the linearity of $f_{ijk}(t)$.

\subsection{A proof of theorem 1.4 for hyperbolic inversive distance circle packing
} The proof is essentially the same as in \S3.3 and is simpler.
For any $r \in \bold R_{>0}^V$, define $u=u(r) \in \bold R_{<0}^V$
by $u_i = \ln \tanh(r_i/2))$. For a circle packing with radii $r
\in \bold R_{>0}^V$, let $u=u(r)$ and call it the $u$-coordinate
of the circle packing metric.

We use the same notation as in \S3.3. Suppose the result does not
hold and let $p \neq q \in \bold R_{<0}^V$ be the $u$-coordinates
of the two distinct hyperbolic circle packing metrics having the
same hyperbolic inversive distance $I \in \bold R_{\geq 0}^E$ and
the same discrete curvature $a =(a_i) \in \bold R^V$. Define the
action functional $W$ on $\bold R_{<0}^V$ by the same formula
(3.8) where $\tilde{F}$ is the concave function in proposition 3.2
associated to theorem 3.1(2). Then the same proof goes through as
in \S3.3 since in this case, one of $f_{ijk}(t)$ is strictly
convex for $t$ near 0 and 1.

\section{2-dimensional Schlaefli Type Action Functionals and Their Extensions}

The following was proved in \cite{Lu1}. The proof is a straight
forward calculation.

\begin{theorem}
Suppose $\Delta$ is a triangle in the Euclidean plane $\bold E^2$,
or the hyperbolic plane $\bold H^2$, or the 2-sphere $\bold S^2$
so that its edge lengths are $l_1, l_2, l_3$ and its inner angles
are $\theta_1, \theta_2, \theta_3$ where $\theta_i$'s angle is
opposite to the edge of length $l_i$. Let $h \in \bold R$ and let
$\Omega$ be the natural domain for length vectors  appeared in
lemma 2.2.

\begin{enumerate}

\item For a Euclidean triangle,
$$ w_{ h  } = \sum_{i=1}^3 \frac{\int^{\theta_i}_{\pi/2}
\sin^{ h  }(t) dt}{l_{i}^{h +1}}  dl_i$$ is a closed 1-form on
$\Omega$. The integral  $\int^u_{-(h,h,h)} w_{ h  }$ is locally
convex in variable $u=(u_1, u_2, u_3)$ where $u_i=\ln l_i$ for
$h=0$ and $u_i=-\frac{l_i^{-h}}{h}$ for $h \neq 0$. Furthermore,
$\int^u_{-(h,h,h)} w_h$ is locally strictly convex in hypersurface
$u_1+u_2+u_3 =0$.

\item  For a spherical triangle,
$$ w_{ h  } = \sum_{i=1}^3 \frac{\int^{\theta_i}_{\pi/2}
 \sin^{ h  }(t) dt}{\sin^{ h  +1}(l_i)} dl_i$$ is a closed 1-form
 on $\Omega$.
The integral $\int^u_0 w_{ h  }$ is locally strictly convex in
$u=(u_1, u_2, u_3)$ where  $u_i =\int^{l_i}_{\pi/2}$  $ \sin^{- h
-1}(t) dt$.

\item  For a hyperbolic triangle,
$$ w_{ h  } = \sum_{i=1}^3 \frac{\int^{\theta_i}_{\pi/2}  \sin^{ h  }(t) dt}{\sinh^{ h  +1}(l_i)} dl_i$$ is a closed
 1-form.

\item  For a hyperbolic triangle,
$$ w_{ h  } = \sum_{i=1}^3 \frac{\int^{\frac{1}{2}
(\theta_i -\theta_j -\theta_k) }_0  \cos^{ h  }(t) dt}{\coth^{ h
+1}(l_i/2)} dl_i$$ is a closed  1-form. The integral $\int^u_0 w_{
h }$ is locally strictly convex in $u=(u_1, u_2, u_3)$ where  $u_i
=\int^{l_i}_{1}$  $ \coth^{- h -1}(t/2) dt$.

\end{enumerate}
\end{theorem}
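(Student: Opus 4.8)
The plan is to prove the four statements in parallel, reducing everything to (i) the formulas for the derivatives of the angles of a triangle with respect to its edge lengths, (ii) the law of sines in the relevant geometry, and (iii) one linear-algebra fact about a symmetric $3\times 3$ matrix built from the three angles.

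First I would record, from the cosine laws (2.1)--(2.3), the partial derivatives of the angle functions. For a Euclidean triangle of area $A$ one finds $\partial\theta_i/\partial l_i = l_i/(2A)$ and $\partial\theta_i/\partial l_j = -l_i\cos\theta_k/(2A)$ for distinct $i,j,k$; differentiating (2.2) and (2.3) gives the same formulas with $l_m$ replaced by $g(l_m)$ and $2A$ replaced by $J:=c\,g(l_1)g(l_2)g(l_3)$, where $g$ is $\sin$ in the spherical and $\sinh$ in the hyperbolic case, $c$ is the law-of-sines constant $\sin\theta_i/g(l_i)$, and, setting $g=\mathrm{id}$ in the Euclidean case, $J=2A$. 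For part (4) I would instead differentiate the dual (second) cosine law for hyperbolic triangles, together with half-angle identities, to get the analogous formulas for the derivatives of $\beta_i:=\frac{1}{2}(\theta_i-\theta_j-\theta_k)$. These are the ``straightforward calculations'' referred to in the theorem.

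Closedness is then immediate. Writing $w_h=\sum_i a_i\,dl_i$, one has $a_i=\Phi_h(\theta_i)\,g(l_i)^{-h-1}$ with $\Phi_h(\theta)=\int_{\pi/2}^{\theta}\sin^h(t)\,dt$ (and $a_i=\Psi_h(\beta_i)\tanh^{h+1}(l_i/2)$, $\Psi_h(\beta)=\int_0^{\beta}\cos^h(t)\,dt$, in case (4)), so for $i\neq j$
$$\frac{\partial a_i}{\partial l_j}=\frac{\sin^h(\theta_i)}{g(l_i)^{h+1}}\,\frac{\partial\theta_i}{\partial l_j}=-\frac{c^h\cos\theta_k}{J},$$
where the last step uses $\sin\theta_i=c\,g(l_i)$ to cancel all the $h$-dependent factors. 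The right-hand side is symmetric in $i$ and $j$, so $w_h$ is closed for every real $h$; in particular this proves part (3). Next I would pass to the coordinate $u_i$ determined by $du_i/dl_i=g(l_i)^{-h-1}$ (which is exactly the $u_i$ in the statement, and is the coordinate for which $\partial F/\partial u_i=\Phi_h(\theta_i)$, resp. $\Psi_h(\beta_i)$).

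Finally, convexity. Differentiating once more, the Hessian of $F$ in the $u$-coordinates is $\partial^2 F/\partial u_i\partial u_j=\sin^h(\theta_i)\,(dl_j/du_j)\,\partial\theta_i/\partial l_j$, and after substituting the derivative formulas, the law of sines, and the explicit $u_i$, this factors as a positive scalar times $D\,M\,D$, where $D$ is a positive diagonal matrix and $M$ is the symmetric matrix with $M_{ii}=1$ and $M_{ij}=-\cos\theta_k$ for distinct $i,j,k$ — the \emph{same} matrix $M$ in cases (1) and (2), independent of $h$ (and an analogous matrix with the $\beta_i$ in case (4)). Hence the sign of the Hessian is governed by $M$. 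In the Euclidean case $M$ is the Gram matrix of the three outward unit edge-normals, so it is positive semidefinite with one-dimensional kernel, spanned by $(\sin\theta_1,\sin\theta_2,\sin\theta_3)$; this kernel corresponds to the scaling direction in $u$-space, so $F$ is locally convex and is locally strictly convex on any hyperplane transverse to scaling — for $h=0$ this is the hyperplane $u_1+u_2+u_3=0$. In the spherical case $M$ is the Gram matrix of three linearly independent face-normals of a trihedral cone, hence positive definite, which gives local strict convexity; part (4) follows the same pattern with the $\beta_i$ in place of the $\theta_i$. The one genuine obstacle is the (semi)definiteness of $M$ and its case-(4) analogue, but since these matrices do not depend on $h$, this is precisely the classical input already on record — Rivin \cite{Ri} and Cohen-Kenyon-Propp \cite{CKP} for the Euclidean case, and \cite{Lu3}, \cite{Le}, \cite{Lu1} for the spherical and hyperbolic ones — so no new inequality is needed for general $h$.
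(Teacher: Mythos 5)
The paper itself does not prove Theorem 4.1 --- it is quoted from \cite{Lu1} with the remark that the proof is a straightforward calculation --- so your proposal has to stand on its own. For parts (1)--(3) it does: the derivative formulas $\partial\theta_i/\partial l_i=g(l_i)/J$ and $\partial\theta_i/\partial l_j=-g(l_i)\cos\theta_k/J$ are correct in all three geometries, the law of sines $\sin\theta_i=c\,g(l_i)$ does cancel every $h$-dependent factor, so closedness and the factorization of the Hessian as a positive multiple of $DMD$ with $M_{ii}=1$, $M_{ij}=-\cos\theta_k$ are right; and your Gram-matrix identification of $M$ (outward edge normals in the Euclidean case, polar-dual normals in the spherical case, kernel spanned by $(l_1,l_2,l_3)\propto(\sin\theta_1,\sin\theta_2,\sin\theta_3)$, i.e.\ the scaling direction in $u$) is a clean, self-contained way to get the stated convexity, including the correct reading of the $h\neq 0$ statement as strict convexity transverse to scaling.

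The genuine gap is part (4), which you dismiss with ``the same pattern.'' It is not the same pattern. The cancellation in (1)--(3) rests on the law of sines; for the half-angle quantities $\beta_i=\frac{1}{2}(\theta_i-\theta_j-\theta_k)$ the analogous quantity $\cos\beta_i\tanh(l_i/2)$ is \emph{not} independent of $i$, so with the coefficient you wrote, $a_i=\Psi_h(\beta_i)\tanh^{h+1}(l_i/2)$, the mixed partials are not symmetric and ``the same computation'' would in fact show the form is not closed, already at $h=0$ (e.g.\ on the hyperbolic triangle with $l=(1,1,1.5)$ one finds $\tanh(l_1/2)\,\partial\beta_1/\partial l_3\neq\tanh(l_3/2)\,\partial\beta_3/\partial l_1$, while the corresponding identity with $\coth$ holds). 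What makes case (4) work is a different identity, coming from the dual cosine law and half-angle formulas: $\tanh^2(l_i/2)=\cos\sigma\,\cos\beta_i/(\cos\beta_j\cos\beta_k)$ with $\sigma=\frac{1}{2}(\theta_1+\theta_2+\theta_3)$, hence $\cos\beta_i\coth(l_i/2)=\sqrt{\cos\beta_1\cos\beta_2\cos\beta_3/\cos\sigma}=:c'$ is the same for all $i$; accordingly the multiplier must be $\coth^{h+1}(l_i/2)$, which is the normalization the paper actually uses in \S 5.1 (there $u_i=\int_1^{l_i}\coth^{h+1}(t/2)\,dt$), the displayed formula in 4.1(4) being off by this reciprocal. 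With that identity one gets the symmetry $\coth(l_i/2)\,\partial\beta_i/\partial l_j=\coth(l_j/2)\,\partial\beta_j/\partial l_i$, hence closedness for every $h$, and a factorization of the Hessian as $(c')^h D'\tilde{M}D'$ with $D'=\mathrm{diag}(\tanh^{h+1}(l_i/2))$ and $\tilde{M}_{ij}=\coth(l_i/2)\,\partial\beta_i/\partial l_j$ independent of $h$, at which point your reduction to the known $h=0$ convexity (Leibon, \cite{Lu1}) becomes legitimate. So to complete case (4) you must (i) derive the derivative formulas for the $\beta_i$, (ii) prove the $\coth$ identity above, and (iii) exhibit $\tilde{M}$ and argue its definiteness (or the reduction to the $h=0$ case) explicitly; none of this is in your sketch, and the law-of-sines analogue you implicitly rely on is false.
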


\subsection{}
Recall that the natural domain $\Omega$ of the edge length vectors
is given by $\Omega =\{ (l_1, l_2, l_3) \in \bold R_{>0}^3 | l_i +
l_j > l_k, \{i,j,k\}=\{1,2,3\}\}$ for Euclidean and hyperbolic
triangles and  $\Omega = \{ (l_1, l_2, l_3) \in \bold R_{>0}^3 |
l_i + l_j
> l_k,   l_1+l_2+l_3 < 2\pi, \{i,j,k\}=\{1,2,3\} \}$.
 Let $J$ be the
natural interval for each individual length $l_i$, i.e., $J =\bold
R_{>0}$ for Euclidean and hyperbolic triangles and $J=(0, \pi)$
for spherical triangles. In each case of theorem 4.1, there exists
a real analytic diffeomorphism $g: J \to g(J)$ from $J$ onto the
open interval $g(J)$ so that $u_i = g(l_i)$. To be more precise,
$g(t) = \ln t$ in the case of $h=0$ of theorem 4.1(1), $g(t)
=-\frac{t^{-h}}{h}$ ($h \neq 0$) in the case of $h \neq 0$ in
theorem 4.1(1),  $g(t) = \int^t_{\pi/2} \sin^{-h-1}(x) dx$ in the
case (2) of theorem 4.1, $g(t)=\int^t_{1} \sinh^{-h-1}(x) dx$ in
the case (3) of theorem 4.1 and $g(t) =\int^t_1 \coth^{-h-1}(x)
dx$ in the case of (4).  The real analytic diffeomorphism $u(l_1,
l_2, l_3) =(u_1, u_2, u_3)$ where $u_i=g(l_i)$ sends $J^3$ onto
the open cube $g(J)^3$ in $\bold R^3$.

By lemma 2.2, each of the angle function $\theta_i(l): \Omega \to
\bold R$ can be extended by constant functions to a continuous
function $\tilde{\theta_i}(l): J^3 \to \bold R$.  Define a
continuous 1-form $\tilde{w_h}$ on $J^3$ by replacing $\theta_i$
in the definition of $w_h$ in theorem 4.1 by $\tilde{\theta_i}$.

\begin{lemma}
The continuous differential 1-form $\tilde{w_h}$ is closed in
$J^3$. \end{lemma}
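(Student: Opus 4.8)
\emph{Plan.} The plan is to deduce the closedness of $\tilde{w_h}$ from Proposition 2.3, applied a finite number of times, with the ambient open set equal to $J^3$ and the ``inner'' region equal to the natural domain $\Omega$, peeling off one component of $J^3-\Omega$ at each step. The two inputs are: (i) $\tilde{w_h}|_\Omega$ is literally the $1$-form $w_h$ of Theorem 4.1, hence closed on $\Omega$; and (ii) on each component of $J^3-\overline{\Omega}$ the extended angles $\tilde{\theta_i}$ are constants, so that $\tilde{w_h}$ there is a $1$-form whose closedness is immediate.

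First I would record the decomposition of $J^3$ into $\Omega$, its complementary regions, and the analytic hypersurfaces separating them. For $\{i,j,k\}=\{1,2,3\}$ set $\partial_i\Omega=\{l\in J^3:\ l_i=l_j+l_k\}$, and in the spherical case also $\partial_0\Omega=\{l\in J^3:\ l_1+l_2+l_3=2\pi\}$; each is the zero set in $J^3$ of a nonconstant real analytic function with nonvanishing gradient, hence a real analytic codimension-$1$ submanifold of $J^3$. These hypersurfaces are pairwise disjoint: inside $J^3$ the relation $l_i=l_j+l_k$ is incompatible with each of $l_j\ge l_i+l_k$, $l_k\ge l_i+l_j$, and $l_1+l_2+l_3\ge 2\pi$, since the first would force $l_k\le 0$, the second $l_j\le 0$, and the third $l_i\ge\pi$. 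It follows that the connected components of $J^3-\Omega$ are the convex sets $V_i=\{l\in J^3:\ l_i\ge l_j+l_k\}$ (and $V_0=\{l\in J^3:\ l_1+l_2+l_3\ge 2\pi\}$ in the spherical case), that the boundary of $V_r$ in $J^3$ is the corresponding hypersurface (one of the $\partial_i\Omega$), and that $J^3-\overline{\Omega}=\bigsqcup_r V_r^\circ$, where $V_r^\circ$ is the interior of $V_r$, an intersection of $J^3$ with an open half space.

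Next I would check that $\tilde{w_h}$ is closed on each $V_r^\circ$. By Lemma 2.2 the extended angles $\tilde{\theta_i}$ are constant on each connected component of $J^3-\Omega$, hence on $V_r$ and so on $V_r^\circ$. Therefore, in the formula for $w_h$ in Theorem 4.1, each numerator $\int_{\pi/2}^{\tilde{\theta_i}}\sin^h(t)\,dt$ (respectively $\int_0^{(\tilde{\theta_i}-\tilde{\theta_j}-\tilde{\theta_k})/2}\cos^h(t)\,dt$ in case~(4)) is a constant on $V_r^\circ$, while each denominator ($l_i^{h+1}$, or $\sin^{h+1}(l_i)$, or $\sinh^{h+1}(l_i)$, or $\coth^{h+1}(l_i/2)$) depends only on $l_i$. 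Hence on $V_r^\circ$ the coefficient of $dl_i$ in $\tilde{w_h}$ is a function of $l_i$ alone, so $\tilde{w_h}=\sum_i c_i(l_i)\,dl_i$ is the differential of $\sum_i C_i(l_i)$ with $C_i'=c_i$, in particular exact and closed on $V_r^\circ$. Since the $V_r^\circ$ are disjoint open sets, $\tilde{w_h}$ is closed on $J^3-\overline{\Omega}$.

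Finally I would run the peeling induction. Enumerate the components of $J^3-\Omega$ as $V_1,\dots,V_N$ ($N=3$ in the Euclidean and hyperbolic cases of Theorem 4.1, $N=4$ in the spherical case), and set $X_m=J^3-\bigcup_{r>m}V_r$ for $0\le m\le N$; each $X_m$ is open (the $V_r$ are closed in $J^3$), $X_0=\Omega$, and $X_N=J^3$. By Theorem 4.1, $\tilde{w_h}$ is closed on $X_0$. Assume it is closed on $X_{m-1}$. Inside the open set $X_m$, the open subset $X_{m-1}=X_m-V_m$ is bounded by the single real analytic hypersurface $\partial V_m$, and $X_m-\overline{X_{m-1}}=V_m^\circ$; both facts rest on the pairwise disjointness above, which gives $V_m\subset X_m$ and $\partial V_m\subset X_m$. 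Since $\tilde{w_h}$ is continuous on $J^3$, closed on $X_{m-1}$ by the induction hypothesis, and closed on $V_m^\circ$ by the previous step, Proposition 2.3 shows $\tilde{w_h}$ is closed on $X_m$; taking $m=N$ proves the lemma. I expect the only real nuisance to be this last bookkeeping — confirming at each stage that the region filled in is exactly one side of exactly one real analytic hypersurface contained in $X_m$ — rather than any analytic difficulty, since everything reduces to the elementary incompatibilities among the linear inequalities cutting out $\Omega$ and its complementary regions.
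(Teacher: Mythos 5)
Your proof is correct and follows essentially the paper's own route: the paper proves the lemma by one application of Proposition 2.4 with $X=J^3$ and $A=\Omega$, using Theorem 4.1 for closedness on $\Omega$ and the constancy of the extended angles to see that on each component of $J^3-\overline{\Omega}$ the form is $\sum_{i} c_i\,du_i$ with constant $c_i$, hence closed. Your component-by-component peeling induction and the separated-variables/exactness observation on each $V_r^\circ$ are just a more explicit bookkeeping of the same argument (and note the gluing statement you invoke is the paper's Proposition 2.4, not 2.3).
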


\begin{proof} By proposition 2.4 where we take $X =J^3$ and
$A=\Omega$, it suffices to show that $\tilde{w_h}$ is closed in
each connected component $U$ of $J^3 -\overline{\Omega}$. By
theorem 4.1 $\tilde{w}|_A$ is closed, the restriction of
$\tilde{w_h}$ to $U$ is of the form $\sum_{i=1}^3 c_i du_i$ where
$u_i = g(l_i)$ and $c_i$ is a constant. Thus $\tilde{w_h}|_U$ is
closed.
\end{proof}

\begin{proposition}
The pull back 1-form $(u^{-1})^* (\tilde{w_h})$ on $g(J)^3$ is a
closed 1-form. Furthermore, if $F(u) = \int^u w_h$ is locally
convex in $u(\Omega)$ (i.e, in the case (1),(2), (4) of theorem
4.1), then $\tilde{F}(u) = \int^u (u^{-1})^* (\tilde{w_h})$ is
convex in $u$ in $g(J)^3$.
\end{proposition}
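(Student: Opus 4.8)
The plan is to read off both statements from results already in hand: the closedness from lemma 4.3 together with the invariance of closedness under smooth changes of coordinates, and the convexity from corollary 2.6 applied in the $u$-coordinates. For the first assertion, recall that $u = (g(l_1), g(l_2), g(l_3))$ is a real analytic diffeomorphism of $J^3$ onto $g(J)^3$, so $(u^{-1})^*$ carries the continuous $1$-form $\tilde{w_h}$ on $J^3$ to a continuous $1$-form on $g(J)^3$. Since closedness of a continuous $1$-form is a local property invariant under smooth changes of coordinates (as used in the proof of proposition 2.4), and since $\tilde{w_h}$ is closed on $J^3$ by lemma 4.3, the pull-back $(u^{-1})^*(\tilde{w_h})$ is closed on $g(J)^3$.

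For the second assertion I apply corollary 2.6 with $X = g(J)^3$ and $A = u(\Omega)$, so I must check its hypotheses. First, $X$ is an open convex set, being a product of open intervals. Second, $A$ is an open subset of $X$ bounded by a real analytic codimension-one submanifold: inside $J^3$ the boundary of $\Omega$ lies in the union of the affine hyperplane pieces $\{l_i = l_j + l_k\}$ (together with $\{l_1 + l_2 + l_3 = 2\pi\}$ in the spherical case), and any two of these pieces are disjoint in $J^3$ — simultaneously $l_i = l_j + l_k$ and $l_j = l_i + l_k$ forces $l_k = 0$, while $l_1 + l_2 + l_3 = 2\pi$ together with $l_i = l_j + l_k$ forces $l_i = \pi$ — so $\partial\Omega \cap J^3$ is a genuine real analytic codimension-one submanifold, and applying the real analytic diffeomorphism $u$ gives the claim for $A$. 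Third, the restriction of $(u^{-1})^*(\tilde{w_h})$ to $A$ equals $(u^{-1})^*(w_h)$, which is continuous and closed by theorem 4.1, and in cases (1), (2), (4) the primitive $F(u) = \int^u w_h$ is locally convex on $u(\Omega)$ by that theorem. Fourth — the point that makes the particular choice of $g$ essential — each coefficient of $(u^{-1})^*(w_h)$ extends continuously to $X$ by constant functions. Indeed $du_i = g'(l_i)\, dl_i$ with $g'(l_i)$ equal to $l_i^{-h-1}$, $\sin^{-h-1}(l_i)$, or $\coth^{-h-1}(l_i/2)$ according to the case, which is precisely the factor $1/l_i^{h+1}$, $1/\sin^{h+1}(l_i)$, or $1/\coth^{h+1}(l_i/2)$ appearing in $w_h$; hence in the $u$-coordinates
$$ w_h = \sum_{i=1}^3 \left( \int_{\pi/2}^{\theta_i} \sin^h(t)\, dt \right) du_i $$
in cases (1) and (2), and the analogous expression with coefficient $\int_0^{\frac12(\theta_i - \theta_j - \theta_k)} \cos^h(t)\, dt$ in case (4). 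Thus each coefficient is a function of the angles $\theta_1, \theta_2, \theta_3$ alone; replacing the $\theta_m$ by their extensions $\tilde\theta_m$, which by lemma 2.2 are constant on every connected component of $J^3 - \overline{\Omega}$, replaces each coefficient by a function constant on the corresponding component of $X - A$, and the resulting $1$-form is exactly $(u^{-1})^*(\tilde{w_h})$. Therefore corollary 2.6 applies and yields that $\tilde F(u) = \int^u (u^{-1})^*(\tilde{w_h})$ is a $C^1$-smooth convex function on $g(J)^3$ extending $F$.

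The proof is thus essentially an assembly of lemma 4.3 and corollary 2.6, and the only real thing to verify is the fourth point above: that the coordinate change $u_i = g(l_i)$, designed exactly so that $g'(l_i)\,dl_i$ cancels the denominator in $w_h$, turns $w_h$ into a $1$-form whose coefficients are functions of the triangle's inner angles alone. Once this is observed, the extension-by-constants of the angle functions provided by lemma 2.2 propagates immediately to an extension-by-constants of the coefficients of the $1$-form, and corollary 2.6 does the rest; the disjointness of the degenerate boundary pieces of $\Omega$ inside $J^3$ is what supplies the real analytic submanifold hypothesis of corollary 2.6.
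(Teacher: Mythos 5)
Your proposal is correct and follows essentially the same route as the paper: both apply Corollary 2.6 with $X=g(J)^3$ and $A=u(\Omega)$, using that in the $u$-coordinates the coefficients of $w_h$ depend only on the inner angles, so Lemma 2.2 supplies the extension by constants (and hence linearity, thus local convexity, of $\tilde F$ on each component of the complement). Your verification is just more explicit than the paper's about the disjointness of the boundary hyperplane pieces and the cancellation $g'(l_i)\,dl_i$ against the denominators; only note that the closedness of $\tilde{w_h}$ on $J^3$ is the paper's Lemma 4.2, not ``lemma 4.3.''
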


Note that by the construction, if $u \in u(\Omega)$ and $w_h =
\sum_{i=1}^3 \alpha_{i, h} (u) du_i$ (as shown in theorem 4.1)
then
\begin{equation}
\frac{\partial \tilde{F}(u)}{\partial u_i} = \alpha_{i, h}(u).
\end{equation}
Furthermore, by definition, the $\phi_h$ and $\psi_h$ curvatures
are sum of two of $a_{i,h}(u)$'s.

\begin{proof} By corollary 2.6 where we take $X = g(J)^3$ and $A=
u(\Omega)$, it suffices to show that $u(\Omega)$ is bounded by a
real analytic surface in $X$ and $\tilde{F}(u)$ is convex in
$u(\Omega)$ and in each component of $g(J)^3 -
\overline{u(\Omega)}$.

Since $\Omega$ in $J^3$ is bounded by hyperplanes and $u(l)
=(g(l_1), g(l_2), g(l_3))$ is a real analytic diffeomorphism, it
follows that $u(\Omega)$ is bounded by a real analytic surface in
$g(J)^3$.

By the assumption $\tilde{F}(u)$ is convex in $u(\Omega)$. If $U$
is a connected component of $g(J)^3 - \overline{u(\Omega)}$, then
$\tilde{F}(u)$ is linear on $U$ since its partial derivatives are
constants on $U$ by the construction. Thus by corollary 2.6, the
result follows. \end{proof}

\section{ A Proof of Theorem 1.2}
The argument is essentially the same as that in \S3.3.  Recall
that $E$ is the set of all edges in the triangulated surface $(S,
T)$. If $ x \in \bold R^E$ and $i \in E$, we use $x_i$ to denote
$x(i)$. If $s \in T^{(2)}$ is a triangle with edges $i, j, k \in
E$, we denote it by $s=\{i,j,k\}$.

\subsection{A proof of theorem 1.2(3)}

Suppose otherwise that there exist two distinct hyperbolic
polyhedral metrics on $(S, T)$ so that their $\psi_h$ curvatures
are the same. Let $a=(a_i) \in \bold R^E$ be their common $\psi_h$
curvature.

Recall that a polyhedral metric on $(S,T)$ is given by its edge
length map $l: E \to \bold R_{>0}$. In using the variational
principle in theorem 4.1(4), the natural variable is given by $u:
E \to \bold R$ where $u(e) = g(l(e))$ with  $g(t) = \int^t_1
\coth^{h+1}(s/2) ds$. We call it the $u$-coordinate of the
polyhedral metric $l$ and we will use the $u$-coordinate to set up
the variational principle. Therefore there are two distinct points
(as $u$-coordinates)  $p \neq q \in g(\bold R_{>0})^E$ so that
their corresponding $\psi_h$ curvatures are the same $a \in \bold
R^E$. We will derive a contradiction by using the locally strictly
convex functions $F$ and its convex extension $\tilde{F}$
introduced in proposition 4.3 (associated to theorem 4.1(4)).

Define a $C^1$-smooth function $W: g(\bold R_{>0})^E \to \bold R$
by
$$ W(u) = \sum_{s \in T^{(2)}, s =\{i,j,k\}, i,j,k \in E} \tilde{F}(u_i,
u_j, u_k) - \sum_{ i \in E} a_i u_i.$$

The function $W$ is convex since it is a summation of convex
functions. Furthermore, by the definition of $W$, (4.1), the
definition of $\psi_h$ and $(a_i)$, $p$ and $q$ are both critical
points of $W$. Since $W$ is convex, $p$ and $q$ are both minimal
points of $W$. Furthermore, for all $t \in [0,1]$, $tp+(1-t)q$ are
minimal points of $W$. In particular, $$ W(tp + (1-t)q) = W(p)$$
for all $ t \in [0, 1]$. Since $$W(tp+(1-t)q) = \sum_{i,j,k \in E,
\{i,j,k\} \in T^{(2)}}f_{ijk}(t) - \sum_{ i \in E} a_i (tp_i +
(1-t) q_i)$$ where the function \begin{equation} f_{ijk}(t)
=\tilde{F}(tp_i + (1-t) q_i, tp_j + (1-t)q_j , tp_k +(1-t) q_k)
\end{equation} is convex, it follows that $f_{ijk}(t)$ is linear
in $t \in [0, 1]$.  Since $p \neq q$,  there exists a triangle
with edges $i,j,k \in E$ so that $(p_i,  p_j, p_k ) \neq (q_i,
q_j, q_k)$. Thus for $t \in [0, 1]$ close to $0$ or $1$, by
theorem 4.1 on the local strictly convexity, $f_{ijk}(t)$ is
strictly convex in $t$ near $0, 1$. This is a contradiction to the
linearity of $f_{ijk}(t)$.

\subsection{A proof of theorem 1.2(2)}
The proof is exactly the same as above using the extended convex
function $\tilde{F}$ in proposition 4.3 associated to theorem
4.1(2).

\subsection{A proof of theorem 1.2(1)}
The proof is the same as that in \S5.1 using the similarly defined
function $W$.  To be more precise, let $g(t) =- \frac{t^{-h}}{h}$
for $h \neq 0$ and $g(t) = \ln t$. By the same set up as in \S5.1,
we conclude that $f_{ijk}(t) $ given by (5.1) is linear in $t$. We
claim this implies that the two Euclidean polyhedral metrics
$u^{-1}(p)$ and $u^{-1}(q)$ differ by a scalar multiplication.
There are two cases to be discussed depending on $h=$ or $h \neq
0$.

CASE 1. $h=0$. In this case, $p-q \neq c(1,1,...,1)$ in $\bold
R^E$ for any constant $c$. By the connectivity of the surface $S$,
there exists a triangle with edges $i,j,k \in E$ so that $(p_i,
p_j, p_k) - (q_i, q_j, q_k) \neq (c,c,c)$ for any constant $c$. On
the other hand, by theorem 4.1(1), the action functional $F$ is
strictly locally convex in the hyperplane $u_1+u_2+u_3=0$ and $F(
u + (c,c,c) = F(u)$ for a scalar $c$ and $u \in u(\Omega)$. In
particular, this implies that the function $f_{ijk}(t)$ is
strictly convex in $t \in [0, 1]$  for $t$ close to $0$ or $1$.
This contradicts the linearity of  the function $f_{ijk}(t)$.

 CASE 2. $ h \neq 0$. In this case, $p \neq c q$ for any constant
 $c$. In particular, there exists a triangle with three edges
 $i,j,k \in E$ so that $(p_i, p_j, p_k) \neq c (q_i, q_j, q_k)$
 for any $c \in \bold R$.  By theorem 4.1(1), the function $f_{ijk}(t)$ is
strictly convex in $t \in [0, 1]$  for $t$ close to $0$ or $1$.
This contradicts the linearity of  the function $f_{ijk}(t)$.

Thus the two polyhedral metrics differ by a scaling.


\bibliographystyle{amsalpha}

\end{document}